\newtheorem{theorem}{Theorem}
\newtheorem{lemma}{Lemma}
\newtheorem{claim}{Claim}
\title{\bf  Vertex-Critical $(P_5, chair)$-Free Graphs}
\author{
Shenwei Huang\thanks{College of Computer Science, Nankai University, Tianjin 300350, China. Email: \texttt{shenweihuang@nankai.edu.cn.} Supported by Natural Science Foundation of Tianjin (20JCYBJC01190).} \thanks{Tianjin Key Laboratory of Network and Data Security Technology, Nankai University, Tianjin 300071, China.}
\and
Zeyu Li\thanks{College of Computer Science, Nankai University, Tianjin 300350, China.} \thanks{Tianjin Key Laboratory of Network and Data Security Technology, Nankai University, Tianjin 300071, China.}
}
\date{January 4, 2022}
\begin{document}

\maketitle

\begin{abstract} 
Given two graphs $H_1$ and $H_2$, a graph $G$ is $(H_1,H_2)$-free if it contains no induced subgraph isomorphic to $H_1$ or $H_2$. 
A $P_t$ is the path on $t$ vertices. A chair is a $P_4$ with an additional vertex adjacent to one of the middle vertices of the $P_4$.
A graph $G$ is $k$-vertex-critical if $G$ has chromatic number $k$ but every proper induced subgraph of $G$ has chromatic number less than $k$.  In this paper, we prove that there are finitely many 5-vertex-critical $(P_5,chair)$-free graphs. 
\end{abstract}

{\bf Keywords.} Graph coloring; $k$-vertex-critical graphs; forbidden induced subgraphs.

\section{Introduction}

All graphs in this paper are finite and simple.
 We say that a graph $G$ {\em contains} a graph $H$ if $H$ is
isomorphic to an induced subgraph of $G$.  A graph $G$ is
{\em $H$-free} if it does not contain $H$. 
For a family of graphs $\mathcal{H}$,
$G$ is {\em $\mathcal{H}$-free} if $G$ is $H$-free for every $H\in \mathcal{H}$.
When $\mathcal{H}$ consists of two graphs, we write
$(H_1,H_2)$-free instead of $\{H_1,H_2\}$-free.
As usual, $P_t$ and $C_s$ denote
the path on $t$ vertices and the cycle on $s$ vertices, respectively.
A {\em clique} (resp.\ {\em independent set}) in a graph is a set of pairwise adjacent (resp.\ nonadjacent) vertices. The complete
graph on $n$ vertices is denoted by $K_n$.
The graph $K_3$ is also referred to as the {\em triangle}.
The \emph{clique number} of $G$, denoted by $\omega(G)$, is the size of a largest clique in $G$.
For two graphs $G$ and $H$, we use $G+H$ to denote the \emph{disjoint union} of $G$ and $H$.
If a graph $G$ can be partitioned into $k$ independent sets $S_1,\ldots,S_k$ such that there is an edge between every vertex in $S_i$
and every vertex in $S_j$ for all $1\le i<j\le k$, $G$ is called a {\em complete $k$-partite graph}; each $S_i$ is called a {\em part}
of $G$. If we do not specify the number of parts in $G$, we simply say that $G$ is a {\em complete multipartite graph}.
We denote by $K_{n_1,\ldots,n_k}$ the complete $k$-partite graph such that the $i$th part $S_i$ has size $n_i$, for each $1\le i\le k$.

A \emph{$q$-coloring} of a graph $G$ is a function $\phi:V(G)\longrightarrow \{ 1, \ldots ,q\}$ such that
$\phi(u)\neq \phi(v)$ whenever $u$ and $v$ are adjacent in $G$.
And a $q$-coloring of $G$ is also a partition of $V(G)$ into $q$ independent sets.
A graph is {\em $q$-colorable} if it admits a $q$-coloring.
The \emph{chromatic number} of a graph $G$, denoted by
$\chi (G)$, is the minimum number $q$ for which $G$ is $q$-colorable.
We call a graph $G$ is {\em $k$-chromatic} when $\chi(G)=k$. 


A graph $G$ is {\em $k$-critical} if it is
$k$-chromatic and $\chi(G-e)<\chi(G)$ for any edge $e\in E(G)$.
We call a graph is {\em critical} if it is $k$-critical for some integer $k\ge 1$.
A graph $G$ is {\em $k$-vertex-critical} if $\chi(G)=k$ and $\chi(G-v)<k$ for any $v\in V(G)$.
For a set $\mathcal{H}$ of graphs and a graph $G$, we say that $G$ is {\em $k$-vertex-critical $\mathcal{H}$-free}
if it is $k$-vertex-critical and $\mathcal{H}$-free. Our research is mainly motivated by the following theorems.

\begin{theorem}[\cite{HMRSV15}]\label{thm:P5infinite}
For any fixed $k\ge 5$, there are infinitely many $k$-vertex-critical $P_5$-free graphs.
\end{theorem}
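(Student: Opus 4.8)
The plan is to produce, for each fixed $k\ge 5$, an explicit infinite sequence of pairwise non-isomorphic graphs that are simultaneously $P_5$-free, of chromatic number exactly $k$, and vertex-critical. First I would reduce the general case to $k=5$. The key observation is that the complete join $G_1\vee G_2$ of two $P_5$-free graphs is again $P_5$-free: in an induced $P_5=v_1v_2v_3v_4v_5$ every non-adjacent pair (namely $v_1v_3,v_1v_4,v_1v_5,v_2v_4,v_2v_5,v_3v_5$) must lie on the same side of the join, and chasing these non-edges forces all five vertices into one of $G_1,G_2$. Moreover $\chi(G_1\vee G_2)=\chi(G_1)+\chi(G_2)$, and $G_1\vee G_2$ is vertex-critical whenever both factors are. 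Hence, given any $5$-vertex-critical $P_5$-free graph $G$, the graph $G\vee K_{k-5}$ is $k$-vertex-critical and $P_5$-free; applied to an infinite family for $k=5$ this yields an infinite family for every $k\ge 5$ (the base case $k=5$ being $G$ itself). So it suffices to construct infinitely many $5$-vertex-critical $P_5$-free graphs.

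For the core construction I would let the structural constraints that any such graph must satisfy guide the design. A perfect vertex-critical graph is a clique, so a large $5$-chromatic vertex-critical graph is imperfect; since the only $P_5$-free odd hole is $C_5$ while every odd antihole is $P_5$-free, such a graph must contain an induced $C_5$ or an induced $\overline{C_{2m+1}}$. It must also have independence number $\alpha(G)\ge 3$: if $\alpha(G)\le 2$ then the complement $\overline G$ is triangle-free, the clique-cover (i.e.\ colouring) number equals $|V|$ minus the matching number, and a Gallai--Edmonds/matching argument then forces $|V(G)|\le 9$, with $\overline{C_9}$ extremal. Thus the unbounded part of the family must come from large independent sets, while vertex-criticality forbids false twins (colouring one twin like the other shows that deleting it cannot lower $\chi$). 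The construction I would pursue therefore fixes a small $5$-vertex-critical $P_5$-free kernel such as $\overline{C_9}$, together with an expansion operation that enlarges one independent set by attaching new vertices whose neighbourhoods into a fixed small gadget are pairwise distinct (to preclude twins) yet short enough to be absorbed into a bounded number of colour classes; iterating this operation produces arbitrarily large graphs.

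Having fixed the family $\{G_n\}_{n\ge 1}$, the verification splits into three parts. For $P_5$-freeness I would invoke the substitution lemma for the genuinely blown-up part (since $P_5$ is a prime graph, blowing up a $P_5$-free graph by $P_5$-free pieces stays $P_5$-free) and dispatch the extra gadget edges by a finite case analysis of how an induced $P_5$ could meet the kernel, the enlarged independent set, and the gadget. For the chromatic number I would prove $\chi(G_n)=5$ by exhibiting an explicit partition of $V(G_n)$ into five independent sets (upper bound) and noting that $G_n$ contains the fixed $5$-chromatic kernel (lower bound). For vertex-criticality I would, for every vertex $v$, write down an explicit proper $4$-colouring of $G_n-v$; the distinctness of the gadget-neighbourhoods is precisely what makes such a $4$-colouring available. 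Since $|V(G_n)|\to\infty$, the graphs are pairwise non-isomorphic, giving infinitely many.

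The main obstacle is the core construction together with the vertex-criticality check, because the three requirements pull against one another: pinning $\chi$ at $5$ while letting $|V(G_n)|$ grow forces large colour classes, hence large independent sets; large independent sets tend to create twins, which destroy criticality; and the edges one adds to break those twins are exactly the edges most likely to complete an induced $P_5$. The crux is to choose neighbourhoods into the fixed gadget that are at once all distinct (no twins), short enough never to close an induced $P_5$ with the kernel, and compatible enough that deleting any single vertex lets the remainder collapse to four colour classes. I expect essentially all of the case analysis to be concentrated here.
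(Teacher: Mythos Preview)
The paper does not prove this theorem at all: \autoref{thm:P5infinite} is quoted from \cite{HMRSV15} purely as motivation, and no argument for it appears anywhere in the paper. So there is no ``paper's own proof'' against which to compare your attempt.

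As for your proposal itself, the reduction from general $k\ge 5$ to $k=5$ via the join $G\vee K_{k-5}$ is correct and standard: the join of two $P_5$-free graphs is $P_5$-free, chromatic numbers add under joins, and vertex-criticality is preserved. That part is fine.

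The genuine gap is that you never actually carry out the $k=5$ construction. You describe structural constraints such a family must satisfy (imperfection, presence of $C_5$ or an odd antihole, independence number at least $3$, no false twins), and you sketch a \emph{strategy} (start from a kernel like $\overline{C_9}$ and expand an independent set via a gadget with pairwise distinct neighbourhoods), but you never specify the gadget, never define $G_n$, and never verify any of the three properties for a concrete family. Your final paragraph correctly identifies this as the crux and anticipates that ``essentially all of the case analysis'' lives there, but anticipating a difficulty is not the same as resolving it. As written, this is a plan for a proof rather than a proof: the hardest and only non-routine step is left entirely open. The construction in \cite{HMRSV15} is explicit and somewhat delicate; you would need to either reproduce it or give your own concrete family before any of the verification steps you outline can be executed.
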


Thus, it is natural to consider which subclasses of $P_5$-free graphs have finitely many $k$-vertex-critical graphs.
The reason for finiteness is that if we know there are only finitely many $k$-vertex-critical graphs, then there is a polynomial-time algorithm for $(k-1)$-coloring graphs in that class. In 2021, Kameron, Goedgebeur, Huang and Shi \cite{CGHS21} obtained the following dichotomy
result for $k$-vertex-critical $(P_5, H)$-free graphs  when $|H|=4$.

\begin{theorem}[\cite{CGHS21}]
Let $H$ be a graph of order $4$ and $k\geq5$ be a fixed integer. Then there are infinitely many $k$-vertex-critical $(P_5, H)$-free graphs if and only if $H$ is $2P_2$ or $P_1+K_3$. 
\end{theorem}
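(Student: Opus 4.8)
There are eleven graphs on four vertices; the theorem asserts that exactly two of them, $2P_2$ and $P_1+K_3$, lie on the ``infinitely many'' side, while the other nine — $4K_1$, $K_4$, $P_2+2K_1$, $P_3+K_1$, $P_4$, the claw $K_{1,3}$, the paw, the diamond $K_4-e$, and $C_4$ — lie on the ``finitely many'' side. Throughout I use the elementary inequality $|V(G)|\le \chi(G)\cdot\alpha(G)$ (each colour class of an optimal colouring is independent). Hence for the finiteness direction it suffices to bound $\alpha(G)$ by a function of $k$: once $\alpha(G)\le c(k)$, we get $|V(G)|\le k\,c(k)$, and there are only finitely many graphs of bounded order. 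I also use the standard facts that a $k$-vertex-critical graph $G$ is connected, has $\omega(G)\le k$ and minimum degree at least $k-1$, and contains no two non-adjacent vertices with equal neighbourhoods (such ``false twins'' could be deleted without lowering $\chi$, contradicting criticality).

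\textbf{The ``if'' direction.} First I record a lifting operation. For a graph $G$ and $m\ge 0$, consider the join $G\vee K_m$. No vertex of $P_5$, $2P_2$, or $P_1+K_3$ is adjacent to all the remaining vertices of that graph, whereas every vertex of $K_m$ is universal in $G\vee K_m$; hence any induced copy of these three graphs lies entirely inside $G$, so $G\vee K_m$ is $P_5$-free, $2P_2$-free, resp. $(P_1+K_3)$-free whenever $G$ is. Moreover $\chi(G\vee K_m)=\chi(G)+m$, and if $G$ is $j$-vertex-critical then $G\vee K_m$ is $(j+m)$-vertex-critical. Thus for $H\in\{2P_2,P_1+K_3\}$ it suffices to produce one infinite family of $(P_5,H)$-free vertex-critical graphs at a single fixed chromatic number and then apply $\cdot\vee K_{k-5}$ to reach every $k\ge 5$. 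Any such base family must be imperfect — a non-complete vertex-critical graph is imperfect, hence contains an induced $C_5$ — and must realise an unbounded independence number through pairwise-distinct neighbourhoods (forced by the no-false-twins condition). As evidence that the ambient class is genuinely rich, each $\overline{C_{2t+1}}$ is $(P_5,2P_2)$-free (its complement $C_{2t+1}$ is $C_4$-free and triangle-free, hence house-free) and $(t+1)$-vertex-critical; the required fixed-$k$ families refine this idea, and I would exhibit them explicitly and verify $P_5$-freeness, $H$-freeness and per-vertex criticality directly, as in \cite{CGHS21} (the plain $P_5$-free families of \autoref{thm:P5infinite} need not be $H$-free, so a refinement is genuinely necessary).

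\textbf{The ``only if'' direction.} The engine is the Bacs\'o--Tuza theorem: every connected $P_5$-free graph has a dominating clique or a dominating $P_3$, so $G$ has a dominating set $D$ with $|D|\le\max(\omega(G),3)\le k$. Bounding $\alpha(G)$ then reduces to bounding, for each $d\in D$, the size of an independent set inside $N(d)$, plus the $O(1)$ vertices in $I\cap D$. Several cases fall out at once. For $P_4$: $(P_5,P_4)$-free $=P_4$-free, i.e. cographs, which are perfect, so a vertex-critical one is $K_k$. For $4K_1$: $\alpha(G)\le 3$ by definition. For the paw: by Olariu's theorem a connected paw-free graph is triangle-free or complete multipartite; a complete multipartite vertex-critical graph is $K_k$, and a triangle-free $P_5$-free graph is $3$-colourable, contributing nothing for $k\ge 5$. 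For the claw: claw-freeness gives $\alpha(N(v))\le 2$ for every $v$, so each $N(d)$ contributes at most two vertices to a maximum independent set and $\alpha(G)\le 2|D|+2\le 2k+2$.

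\textbf{Main obstacle.} The remaining five graphs — $K_4$, the diamond, $C_4$, $P_3+K_1$ and $P_2+2K_1$ — are the crux, because their local constraints do not bound $\alpha(N(d))$ outright: $K_4$-freeness only makes each $N(d)$ triangle-free, diamond- and $P_3+K_1$-freeness make neighbourhoods (resp.\ non-neighbourhoods) disjoint unions of cliques, and $C_4$-freeness merely forces any two vertices to share at most one common neighbour. For these I would analyse how the vertices of a hypothetical large independent set attach to the bounded dominating set $D$, arguing from the relevant $H$-constraint together with the no-false-twins condition that only boundedly many ``attachment types'' occur, each of bounded multiplicity, thereby bounding $\alpha(G)$ and hence $|V(G)|$. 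I expect this attachment analysis to be delicate and heavily case-based, and — as is typical for results of this kind — to be completed, and the exact finite lists extracted, by an exhaustive computer generation of vertex-critical $(P_5,H)$-free graphs up to the derived size bound, as in \cite{CGHS21}. The other genuinely hard point is the ``if'' direction: building and fully verifying the fixed-$k$ infinite families for $2P_2$ and $P_1+K_3$, where one must grow the independence number while simultaneously preserving $P_5$-freeness, $H$-freeness and criticality; complementation offers little help, since vertex-criticality is not preserved under complementation.
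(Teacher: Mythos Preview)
The paper does not prove this theorem at all: it is quoted verbatim from \cite{CGHS21} as motivation, with no argument given. There is therefore no ``paper's own proof'' to compare your proposal against.

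As for the proposal itself, it is an outline rather than a proof, and you say as much. The easy finiteness cases ($P_4$, $4K_1$, paw, claw) are handled correctly via $|V(G)|\le k\,\alpha(G)$ together with Bacs\'o--Tuza domination and Olariu's theorem. The lifting $G\mapsto G\vee K_{k-5}$ for the ``if'' direction is also correct and standard. But the two pieces you flag as hard are genuinely missing. For the ``only if'' direction, the five remaining graphs ($K_4$, diamond, $C_4$, $P_3+K_1$, $P_2+2K_1$) require real work that you do not do; in particular, $(P_5,C_4)$-freeness and $(P_5,K_4)$-freeness are not handled by any attachment-type argument you sketch, and in \cite{CGHS21} these cases are settled by substantially different structural arguments (and, for some, by explicit exhaustive generation). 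For the ``if'' direction, your example $\overline{C_{2t+1}}$ gives only one $(t{+}1)$-vertex-critical $2P_2$-free graph per $t$, not infinitely many at a fixed $k$; the actual infinite families at $k=5$ in \cite{CGHS21} are specific constructions (for $2P_2$ these are certain circulants, for $P_1+K_3$ a different explicit family) whose $H$-freeness and per-vertex criticality must be verified, and you have not supplied or verified any such family. So the proposal correctly identifies the architecture of the proof in \cite{CGHS21} but leaves both substantive halves undone.
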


In \cite{CGHS21}, it was also asked which five-vertex graphs $H$ can lead to finitely many $k$-vertex-critical $(P_5,H)$-free graphs. 
It is known that there are finitely many 5-vertex-critical ($P_5$,banner)-free graphs~\cite{CHLS19, HLS19}, and
finitely many $k$-vertex-critical $(P_5,\overline{P_5})$-free graphs for every fixed $k$~\cite{DHHMMP17}.
Hell and Huang proved that there are finitely many $k$-vertex-critical $(P_6,C_4)$-free graphs~\cite{HH17}.
This was later generalized to $(P_t,K_{r,s})$-free graphs in the context of $H$-coloring~\cite{KP19}.
This gives an affirmative answer for $H=K_{2,3}$. Recently, it was also shown that the answer to the above question is positive if $H$ is gem or $\overline{P_2+P_3}$ \cite{CQH21}. Moreover, it was proved that there are finitely many 5-vertex-critical $(P_5,bull)$-free
graphs \cite{HL22}. 

In this article, we continue such a study. 
A chair is a $P_4$ with an additional vertex adjacent to one of the middle vertices of the $P_4$ (see \autoref{fig:chair}).
In particular, we prove the following.
\begin{theorem}\label{thm:chair}
There are finitely many 5-vertex-critical $(P_5,chair)$-free graphs.
\end{theorem}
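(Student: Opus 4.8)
The plan is to show that any 5-vertex-critical $(P_5,\text{chair})$-free graph $G$ has bounded order, which suffices for finiteness. Since $G$ is 5-vertex-critical, $\chi(G)=5$, and standard criticality facts give that $G$ is connected, has minimum degree at least $4$, and contains no clique cutset; moreover $\omega(G)\le 5$, and because $G$ is vertex-critical with $\chi(G)=5$, no vertex can be simplicial. The first step is to exploit the $P_5$-free structure together with chair-freeness to control the local neighborhoods. A key classical tool for $P_5$-free graphs is that they are \emph{perfectly divisible} and, more usefully here, that a connected $P_5$-free graph either has a dominating clique or a dominating $P_3$ (by the result of Bacs\'o--Tuza). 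I would first locate a dominating induced subgraph $D$ (a clique or a $P_3$) and analyze how the rest of $V(G)$ attaches to $D$, partitioning $V(G)\setminus D$ according to neighborhoods in $D$.

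The heart of the argument is a careful case analysis driven by the chair. The chair is $P_4$ plus a pendant at an interior vertex; forbidding it severely restricts how an independent set or an induced path can see the neighborhood of a vertex. Concretely, I would fix a vertex $v$ of maximum degree, look at $N(v)$ and $M=V(G)\setminus N[v]$, and use chair-freeness to argue that the bipartite-type interaction between $N(v)$ and $M$ is highly structured: for instance, that each vertex of $M$ has a neighborhood in $N(v)$ that is either very small or nearly all of $N(v)$, and that components of $M$ behave like homogeneous blobs. Combining this with $P_5$-freeness (which forbids long induced paths passing from $M$ through $N(v)$ and back) should force $M$ to be small or to decompose into a bounded number of classes of twins. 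Since $G$ has no clique cutset and minimum degree at least $4$, one then bounds the number of distinct neighborhood types, and hence the number of vertices once twin-reduction is accounted for (vertex-criticality forbids nontrivial twins of certain kinds, which collapses repeated classes).

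The main obstacle I expect is controlling the part $M=V(G)\setminus N[v]$ that is far from $v$: here both $P_5$-freeness and chair-freeness must be invoked simultaneously, and the interaction is delicate because $M$ can itself be large and internally connected. The crux will be proving a lemma of the form ``the neighborhoods in $N(v)$ of vertices of $M$ form a bounded laminar/nested family,'' which is exactly where the chair rules out the ``pendant-plus-path'' configurations that would otherwise allow unboundedly many distinct attachment patterns. Once such a structural lemma is in hand, the counting is routine: the number of neighborhood types is bounded, vertex-criticality prevents genuine duplication (two vertices with the same closed neighborhood that are nonadjacent would be true twins, and a critical graph minimizes such redundancy), and $\omega(G)\le 5$ bounds the size of each type. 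I would therefore organize the proof as: (i) collect criticality and $\omega$ bounds; (ii) invoke the dominating-clique/$P_3$ dichotomy; (iii) prove the structural nesting lemma for $N(v)$ versus $M$ using the chair; (iv) bound the number of twin classes and their sizes; and (v) conclude $|V(G)|=O(1)$, whence finiteness.
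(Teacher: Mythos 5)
There is a genuine gap: what you have written is a plan whose central step is left unproven, and that step is precisely where the difficulty lies. The paper does not work from a maximum-degree vertex or from a Bacs\'o--Tuza dominating clique/$P_3$ at all. It first disposes of the $C_5$-free case by citing the known fact that there are only thirteen $5$-vertex-critical $(P_5,C_5)$-free graphs, and then uses an induced $C_5$ as a rigid frame: every vertex outside the $C_5$ is classified by its neighborhood on the cycle, chair-freeness and $P_5$-freeness kill most classes outright ($S_0$, $S_1$, both kinds of $S_2$), and the surviving classes are bounded one by one. Your proposed ``nesting/laminar family'' lemma for the neighborhoods of $M=V(G)\setminus N[v]$ inside $N(v)$ is neither stated precisely nor proven, and nothing in the proposal indicates why chair-freeness would yield it; this is exactly the structural content that the paper's Claims 1--17 supply in a different (and much more concrete) form.

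Even granting such a lemma, the final counting does not go through as described. Bounding the number of distinct attachment patterns to $N(v)$ does not bound $|M|$: two vertices of $M$ with identical neighborhoods in $N(v)$ but different neighborhoods inside $M$ are not comparable, so vertex-criticality does not eliminate them. (Also, two \emph{nonadjacent} vertices cannot have the same \emph{closed} neighborhood; the correct tool is comparable vertices, i.e.\ $N(u)\subseteq N(v)$, or its generalization to anticomplete subsets as in Lemma~\ref{lem:dominated subsets}.) The paper closes this gap by showing that the relevant pieces ($S^2_3(i)$, $S_4(i)$) decompose into homogeneous components of bounded size, by applying the dominated-subsets lemma to distinct components, and---crucially---by excluding a specific finite list of $5$-vertex-critical graphs ($W$, $P$, $Q_1$, $Q_2$, $Q_3$) that would otherwise arise; none of these ingredients appears in your outline. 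As it stands the proposal identifies some correct general principles ($\omega\le 5$, minimum degree at least $4$, no comparable vertices) but does not contain a proof.
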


\begin{figure}
    \centering
\begin{tikzpicture}[line width = 1pt, vertex/.style={minimum size=2pt,fill=none,draw,circle},auto]
\node [vertex] (v1) at (0,-1){};
\node [vertex] (v2) at (0,0){} ;
\node [vertex] (v3) at (1,0){} ;
\node [vertex] (v4) at (1,-1){};
\node [vertex] (v5) at (1,1){};
\draw (v1)--(v2)--(v3)--(v4);
\draw (v3)--(v5);
\end{tikzpicture}
    \caption{The graph chair.}
    \label{fig:chair}
\end{figure}
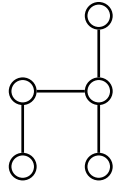

\section{Preliminaries}\label{sec:pre}

For general graph theory notation we follow~\cite{BM08}.
Let $G=(V,E)$ be a graph.  
If $uv\in E$, we say that $u$ and $v$ are {\em neighbors} or {\em adjacent}; 
otherwise $u$ and $v$ are {\em nonneighbors} or {\em nonadjacent}. We use $u\sim v$ to mean that $u$ and $v$ are neighbors
and $u\nsim v$ to mean that $u$ and $v$ are nonneighbors.
The \emph{neighborhood} of a vertex $v$, denoted by $N_G(v)$, is the set of neighbors of $v$.
For a set $X\subseteq V(G)$, let $N_G(X)=\bigcup_{v\in X}N_G(v)\setminus X$.
We shall omit the subscript whenever the context is clear.
For $X,Y\subseteq V$, we say that $X$ is \emph{complete} (resp.\ \emph{anticomplete}) to $Y$
if every vertex in $X$ is adjacent (resp.\ nonadjacent) to every vertex in $Y$.
If $X=\{x\}$, we write ``$x$ is complete (resp.\ anticomplete) to $Y$'' instead of ``$\{x\}$ is complete (resp.\ anticomplete) to $Y$''.
If a vertex $v$ is neither complete nor anticomplete to a set $S$, we say that $v$ is {\em mixed} on $S$.
If a vertex $v$ is neither complete nor anticomplete to two ends of an edge, we say that $v$ is {\em distinguish} the edge.
We say that $H$ is a {\em homogeneous} set if no vertex in $V-H$ is mixed on $H$.
More generally, we say that $H$ is {\em homogeneous} with respect to a subset $S\subseteq V$ if no vertex in $S$ can be mixed on $H$.
 For $S\subseteq V$, the subgraph \emph{induced} by $S$, is denoted by $G[S]$.


A pair of {\em comparable vertices} of $G$ is pairwise nonadjacent vertices $u,v$ such that $N(v)\subseteq{N(u)}$ or $N(u)\subseteq{N(v)}$. It is well-known that $k$-vertex-critical graphs cannot contain comparable vertices.
We shall use the following generalization in later proofs.
 
\begin{lemma}[\cite{CGHS21}]\label{lem:dominated subsets}
Let $G$ be a $k$-vertex-critical graph. Then $G$ has no two nonempty disjoint subsets $X$ and $Y$ of $V(G)$ that satisfy all the following conditions.

\begin{itemize}
\item $X$ and $Y$ are anticomplete to each other.
\item $\chi(G[X])\le \chi(G[Y])$.
\item $Y$ is complete to $N(X)$.
\end{itemize}
\end{lemma}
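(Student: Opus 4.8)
The plan is to argue by contradiction, using the defining property of vertex-criticality to produce a $(k-1)$-coloring of $G$, which is absurd since $\chi(G)=k$. This lemma is a set-theoretic strengthening of the standard fact that a $k$-vertex-critical graph has no comparable vertices (it is recovered by taking $X=\{v\}$ and $Y=\{u\}$ with $u,v$ nonadjacent and $N(v)\subseteq N(u)$). My proof would follow the same philosophy: delete $X$, properly color what remains, and then color $X$ by \emph{borrowing} colors that $Y$ is forced to use.

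So suppose toward a contradiction that such $X$ and $Y$ exist. First I would observe that, since $X$ is nonempty, $G-X$ is an induced subgraph of $G-x$ for any fixed $x\in X$; by vertex-criticality $G-x$ is $(k-1)$-colorable, hence so is its induced subgraph $G-X$. Fix such a $(k-1)$-coloring $\phi$ of $G-X$. Since $X$ and $Y$ are disjoint we have $Y\subseteq V(G)\setminus X$, and $N(X)\subseteq V(G)\setminus X$ by definition, so both sets are colored by $\phi$. Write $C_Y$ and $C_{N(X)}$ for the sets of colors that $\phi$ assigns to $Y$ and to $N(X)$, respectively.

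The two crucial observations are: (i) $C_Y\cap C_{N(X)}=\emptyset$, because $Y$ is complete to $N(X)$, so no vertex of $Y$ can share a $\phi$-color with any vertex of $N(X)$; and (ii) $|C_Y|\ge \chi(G[X])$, because the restriction of $\phi$ to $Y$ is a proper coloring of $G[Y]$ and therefore uses at least $\chi(G[Y])\ge\chi(G[X])$ distinct colors. Granting (ii), I can take a proper coloring of $G[X]$ with exactly $\chi(G[X])$ colors and realize it using only colors drawn from $C_Y$, then extend $\phi$ to all of $G$ by this coloring of $X$. This extension is proper: inside $X$ it is proper by construction; the only edges leaving $X$ go to $N(X)$, and by (i) the colors placed on $X$ (which all lie in $C_Y$) avoid every color on $N(X)$; and $X$ is anticomplete to $Y$, so reusing $Y$'s colors creates no conflict there either. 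The outcome is a $(k-1)$-coloring of $G$, the desired contradiction.

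The heart of the argument—and the only place where all three hypotheses are used together—is the color-transplantation step, which hinges on the interplay of (i) and (ii): completeness of $Y$ to $N(X)$ quarantines the colors of $Y$ away from those of $N(X)$, while the inequality $\chi(G[X])\le\chi(G[Y])$ guarantees that $Y$ is forced to spend enough distinct colors for them to suffice when recoloring $X$. A small point worth checking is that $Y\cap N(X)=\emptyset$, which follows from $X$ being anticomplete to $Y$; this makes the completeness hypothesis well-posed and ensures that (i) concerns genuinely distinct vertices. Everything else is routine bookkeeping.
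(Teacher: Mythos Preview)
Your proof is correct and follows the standard argument for this lemma. Note, however, that the present paper does not actually prove \autoref{lem:dominated subsets}: it is quoted from \cite{CGHS21} without proof, so there is no in-paper argument to compare against. Your contradiction via deleting $X$, $(k-1)$-coloring $G-X$, and recoloring $X$ with colors used on $Y$ (which are forced off $N(X)$ by the completeness hypothesis and are numerous enough by $\chi(G[X])\le\chi(G[Y])$) is precisely the intended approach.
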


\section{New Results}\label{sec:new}

In this section, we prove our new results: there are finitely many 5-vertex-critical $(P_5,chair)$-free graphs. 
To prove \autoref{thm:chair}, we prove the following.

\begin{theorem}\label{thm:containing a C5}
Let $G$ be a 5-vertex-critical $(P_5,chair)$-free graph. If $G$ contains a $C_5$, then $G$ has finite order.
\end{theorem}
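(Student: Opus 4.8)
The plan is to fix an induced $C_5$, say $C=v_1v_2v_3v_4v_5$ (indices read mod $5$), and to control $G$ by classifying every vertex of $V(G)\setminus V(C)$ according to its neighbourhood on $C$. Since $G$ is $5$-vertex-critical we have $\omega(G)\le\chi(G)=5$, and $G$ is connected. First I would use $P_5$-freeness to prune the admissible neighbourhoods on $C$: if $u$ has exactly one neighbour $v_i$ on $C$ then $u\,v_i\,v_{i+1}\,v_{i+2}\,v_{i+3}$ is an induced $P_5$; if $N(u)\cap V(C)=\{v_i,v_{i+2}\}$ then $v_{i+1}\,v_i\,u\,v_{i+2}\,v_{i+3}$ is an induced $P_5$; and if $N(u)\cap V(C)$ is a consecutive pair together with one isolated vertex, a similar induced $P_5$ appears. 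Hence the only surviving nonempty patterns are, up to rotation, the \emph{two consecutive} type $S_i=\{u:N(u)\cap V(C)=\{v_i,v_{i+1}\}\}$, the \emph{three consecutive} type $T_i=\{u:N(u)\cap V(C)=\{v_{i-1},v_i,v_{i+1}\}\}$, the \emph{four consecutive} type $Q_i$ (missing only $v_i$), the \emph{full} type $F$ (complete to $C$), and the \emph{empty} type $M$ (anticomplete to $C$).

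Next I would dispose of the low-adjacency bags using chair-freeness. The key observation is that two nonadjacent vertices $x,y\in S_i$ together with $v_{i-1}$ form an independent triple in $N(v_i)$, and then $v_{i-2}$ (adjacent to $v_{i-1}$ but to none of $v_i,x,y$) serves as a pendant, producing an induced chair on $\{v_i,x,y,v_{i-1},v_{i-2}\}$. Thus each $S_i$ is a clique, and since $S_i$ is complete to the edge $v_iv_{i+1}$, the set $S_i\cup\{v_i,v_{i+1}\}$ is a clique of size at most $5$, whence $|S_i|\le 3$. The analogous argument (independent leaves $x,y,v_{i-2}$ at centre $v_{i-1}$) shows every $T_i$ is a clique, and as $T_i\cup\{v_{i-1},v_i,v_{i+1}\}$ is a clique we get $|T_i|\le 2$. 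So the ten short bags contribute only a bounded number of vertices.

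The remaining and genuinely harder bags are the high-adjacency ones $Q_i$, $F$, and the anticomplete set $M$, for which chair-freeness no longer forces a clique. Here I would combine chromatic bounds with the criticality tools. Since $F$ is complete to $C$ and $\chi(C)=3$, restricting a $5$-colouring gives $\chi(G[F])\le 5-3=2$; likewise $\chi(G[Q_i])\le 5-\chi(P_4)=3$ because $Q_i$ is complete to the $P_4$ on $\{v_{i+1},v_{i+2},v_{i+3},v_{i+4}\}$. These bound chromatic number but not size (a bipartite blow-up such as $K_{n,n}$ is $(P_5,chair)$-free), so the cardinality bounds must come from \autoref{lem:dominated subsets} and the absence of comparable vertices. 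For $M$ I would first show that its external neighbours lie in $F$: a neighbour of $M$ in some $S_i$ or $T_i$ yields an induced $P_5$, while a neighbour $w\in Q_i$ of some $u\in M$ produces an induced chair with centre $w$, leaves $u,v_{i+1},v_{i+3}$, and pendant $v_i$ attached to $v_{i+1}$. With $N(M)\setminus M\subseteq F$, and since any subset of $C$ is anticomplete to $M$ but complete to $F$, one can invoke \autoref{lem:dominated subsets} with an appropriate subset of $C$ as the dominating side to force $M=\emptyset$ (after first bounding $\chi(G[M])$). A parallel domination/comparability analysis, examining how pairs of nonadjacent vertices of $Q_i$ or of $F$ attach to the rest of $G$, is then used to bound $|Q_i|$ and $|F|$.

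I expect the main obstacle to be exactly the set $F$ of vertices complete to the $C_5$, together with its entanglement with $Q_i$ and $M$. Unlike the short bags, $F$ need not be a clique and can induce an arbitrarily large $(P_5,chair)$-free bipartite graph, so every part of its size bound must be extracted from vertex-criticality via comparable vertices and \autoref{lem:dominated subsets}; this requires a careful case analysis of the adjacency between $F$ and each other bag. Once all bags are shown to be bounded and $M$ is eliminated, the order of $G$ is at most $5+\sum_i\bigl(|S_i|+|T_i|+|Q_i|\bigr)+|F|$, a bounded quantity, which proves the theorem.
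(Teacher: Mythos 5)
Your overall strategy (fix an induced $C_5$, classify the remaining vertices by their neighbourhood on it, kill the low-degree bags with $P_5$/chair certificates, and bound the rest via criticality) is the same as the paper's, but your pruning step contains a genuine error that removes an entire bag that cannot in fact be removed. You claim that a vertex $u$ whose neighbourhood on $C$ is ``a consecutive pair together with one isolated vertex'' (i.e.\ $N_C(u)=\{v_{i-2},v_i,v_{i+2}\}$, the paper's $S^2_3(i)$) yields an induced $P_5$. It does not: checking all $5$-vertex subsets of $V(C)\cup\{u\}$ shows this configuration contains no induced $P_5$ and no induced chair, and indeed several of the small $5$-vertex-critical $(P_5,\text{chair})$-free graphs (the graphs $P$, $Q_2$, $Q_3$ in the paper's family $\mathcal{F}$) contain exactly such vertices. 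So this bag can be nonempty, your classification omits it, and none of your later arguments bounds it. In the paper, bounding $|S^2_3(i)|$ is nontrivial: one first shows each component of $S^2_3(i)$ is a homogeneous set, deduces each component is $K_1$ or $K_2$, and then uses \autoref{lem:dominated subsets} together with the exclusion of specific critical graphs to show there is at most one component of each type. (A smaller slip of the same kind: your $P_5$ certificate for the $\{v_i,v_{i+2}\}$ type is not induced because $v_{i+1}\sim v_{i+2}$; that bag is eliminated by a chair, not a $P_5$. Also $T_i\cup\{v_{i-1},v_i,v_{i+1}\}$ is not a clique since $v_{i-1}\nsim v_{i+1}$, though the clique bound still works with $\{v_i,v_{i+1}\}$.)

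The second problem is that the genuinely hard part --- bounding $|S_4(i)|$ and $|S_5|$ (your $Q_i$ and $F$) --- is left as ``a parallel domination/comparability analysis,'' and the one idea that makes that analysis go through in the paper is missing from your proposal. The paper first exhibits an explicit family $\mathcal{F}=\{K_5,W,P,Q_1,Q_2,Q_3\}$ of $5$-vertex-critical $(P_5,\text{chair})$-free graphs and observes that a $5$-vertex-critical graph containing any member of $\mathcal{F}$ as an induced subgraph must equal it; hence one may assume $G$ is $\mathcal{F}$-free. That assumption is what forces $S_5$ to be independent ($W$-freeness), forces the components of $S^2_3(i)$ and $S_4(i)$ to be $K_1$ or $K_2$, and rules out the configurations arising in the star/complete-bipartite case analysis of $S_4(i)$ (via $P$, $Q_1$, $Q_2$, $Q_3$). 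Comparable vertices and \autoref{lem:dominated subsets} alone, as invoked in your sketch, do not suffice: they produce distinguishing neighbours $a\in N(x_1)\setminus N(x_2)$, $b\in N(x_2)\setminus N(x_1)$, but one then needs the forbidden subgraphs in $\mathcal{F}$ to derive a contradiction from the resulting configuration. Finally, the bound on $|S_5|$ is obtained by pigeonhole on neighbourhoods inside the already-bounded sets, which requires all of $S^1_3$, $S^2_3$, $S_4$ to have been bounded first --- so the missing $S^2_3$ bag also blocks this last step.
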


\begin{proof}[Proof of \autoref{thm:chair} assuming \autoref{thm:containing a C5}]
Let $G$ be a 5-vertex-critical $(P_5,chair)$-free graph. If $G$ contains $C_5$, then $G$ has finite order
by \autoref{thm:containing a C5}. If $G$ is $C_5$-free, then $G$ has finite order by a result in \cite{HMRSV15}
that there are only thirteen 5-vertex-critical $(P_5,C_5)$-free graphs. In either case, $G$ has finite order.
This completes the proof.
\end{proof}

Next we prove \autoref{thm:containing a C5}.

\subsection{Structure Around $C_5$}
In this subsection, we discuss some structural properties of $(P_5,chair)$-free graphs containing a $C_5$. 
Let $G$ be a connected $(P_5,chair)$-free graph containing an induced $C_5$.
Let $C=v_1, v_2, v_3, v_4, v_5$ be an induced $C_5$ with $v_iv_{i+1}$ being an edge. 
We divide $V\backslash{V(C)}$ as follows, where all indices are modulo 5.

\begin{itemize}
    \item []$S_0=\{v\in{V\backslash{V(C)}}:N_C(v)=\varnothing\}$,
    \item []$S_1(i)=\{v\in{V\backslash{V(C)}}:N_C(v)=\{v_i\}\}$,
    \item []$S^1_2(i)=\{v\in{V\backslash{V(C)}}:N_C(v)=\{v_i,v_{i+1}\}\}$,
    \item []$S^2_2(i)=\{v\in{V\backslash{V(C)}}:N_C(v)=\{v_i,v_{i+2}\}\}$,
    \item []$S^1_3(i)=\{v\in{V\backslash{V(C)}}:N_C(v)=\{v_{i-1},v_i,v_{i+1}\}\}$,
    \item []$S^2_3(i)=\{v\in{V\backslash{V(C)}}:N_C(v)=\{v_{i-2},v_i,v_{i+2}\}\}$,
    \item []$S_4(i)=\{v\in{V\backslash{V(C)}}:N_C(v)=\{v_{i-2},v_{i-1},v_{i+1},v_{i+2}\}\}$,
    \item []$S_5=\{v\in{V\backslash{V(C)}}:N_C(v)=V(C)\}$.
\end{itemize}

We use $S^m_3(i\pm{1})$ to denote $S^m_3(i+1)\cup{S^m_3(i-1)}$ for $m=1,2$.
The notations $S^m_3(i\pm{2})$, $S_4(i\pm{1})$ and $S_4(i\pm{2})$ are defined similarly.
We now prove some properties about these sets.

\begin{claim}\label{clm:S12}
    $S_1(i)\cup{S^1_2(i)}\cup{S^2_2(i)}=\varnothing$, for all $1\leq i\leq5$.
\end{claim}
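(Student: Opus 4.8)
The plan is to rule out all three sets at once by a short case analysis, in each case deriving a contradiction from a single forbidden induced subgraph built from $v$ and four vertices of the cycle $C$. Since the cyclic shift $v_j \mapsto v_{j+1}$ is an automorphism of the induced $C_5$, it is enough to treat $i=1$; the general statement then follows by symmetry. So I fix $i=1$, and in each case I suppose for contradiction that the set contains a vertex $v$ and read off its neighbourhood on $C$ from the definition.

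The two cases $S_1(1)$ and $S^1_2(1)$ are handled by producing an induced $P_5$, obtained by starting a path at $v$ and then walking around $C$ in the direction that avoids the rest of $N_C(v)$. For $v\in S_1(1)$ (so $N_C(v)=\{v_1\}$), I would check that $v, v_1, v_2, v_3, v_4$ is an induced $P_5$: the consecutive pairs are edges, $v$ is anticomplete to $\{v_2,v_3,v_4\}$, and the remaining non-edges $v_1v_3$, $v_1v_4$, $v_2v_4$ hold because $C$ is induced. For $v\in S^1_2(1)$ (so $N_C(v)=\{v_1,v_2\}$), the same idea applied through the neighbour $v_2$ gives the induced $P_5$ on $v, v_2, v_3, v_4, v_5$. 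Both contradict $P_5$-freeness, and the only thing to watch is keeping track of which cycle non-edges are available.

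The case $S^2_2(1)$ is the one I expect to be the crux, because here no induced $P_5$ falls out directly and a chair must be exhibited instead. For $v\in S^2_2(1)$ we have $N_C(v)=\{v_1,v_3\}$. I would take the induced $P_4$ on $v, v_3, v_4, v_5$ (using $v\sim v_3$, the cycle edges $v_3v_4$ and $v_4v_5$, and the non-edges $vv_4$, $vv_5$, $v_3v_5$) and then attach $v_2$: since $v_2\sim v_3$ while $v_2$ is nonadjacent to $v$, $v_4$ and $v_5$, the vertex $v_2$ is a pendant hanging off the middle vertex $v_3$ of this $P_4$. Hence $\{v, v_2, v_3, v_4, v_5\}$ induces a chair, contradicting chair-freeness. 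Once the correct four cycle vertices are selected, every verification reduces to the fact that $C$ has no chords, so the argument is routine after the configurations are pinned down.
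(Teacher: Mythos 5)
Your proof is correct and follows essentially the same strategy as the paper: for a vertex in $S_1(i)\cup S^1_2(i)$ one exhibits an induced $P_5$ through its cycle neighbour, and for a vertex in $S^2_2(i)$ one exhibits a chair with $v_{i+1}$ (your $v_2$) as the pendant vertex on a middle vertex of a $P_4$. The only differences are cosmetic choices of which four cycle vertices to use, and all your adjacency checks are valid.
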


\begin{proof}
    Suppose not. 
    Let $u,v$ be arbitrary two vertices such that $v\in{S_1(i)\cup{S^1_2(i)}}$, $u\in{S^2_2(i)}$.
    Then $\{v,v_i,v_{i-1},v_{i-2},v_{i-3}\}$ induces a $P_5$, and $\{u,v_i,v_{i-1},v_{i-2}\}$ and $\{v_{i+1}\}$ induce a chair.
\end{proof}

\begin{claim}\label{clm:S0}
    $S_0=\varnothing$.
\end{claim}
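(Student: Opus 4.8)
The plan is to suppose $S_0\neq\varnothing$ and derive a contradiction, either by exhibiting a forbidden induced subgraph or by invoking \autoref{lem:dominated subsets}. Since $G$ is connected and $V(C)\neq\varnothing$, some component $D$ of $G[S_0]$ has a nonempty outside-neighborhood. As $D$ is anticomplete to $C$ and, by \autoref{clm:S12}, the sets $S_1(i),S^1_2(i),S^2_2(i)$ are all empty, every vertex of $N(D)$ lies in $S^1_3(i)\cup S^2_3(i)\cup S_4(i)\cup S_5$. I would fix $u\in D$ and $w\in N(D)$ with $u\sim w$, keeping in mind that $u$ is anticomplete to $C$.

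First I would dispose of the three cases in which $w$ misses a cycle vertex, each by a short explicit witness. If $w\in S^1_3(i)$ then $w\nsim v_{i+2},v_{i-2}$, and $\{u,w,v_{i+1},v_{i+2},v_{i-2}\}$ induces a $P_5$. If $w\in S^2_3(i)$ then $w\nsim v_{i-1},v_{i+1}$, and $\{u,w,v_i,v_{i-1},v_{i+1}\}$ induces a chair (the induced path on $u,w,v_i,v_{i-1}$ with pendant $v_{i+1}$ at the middle vertex $v_i$). If $w\in S_4(i)$ then $v_i$ is the only cycle non-neighbor of $w$, and $\{u,w,v_{i+1},v_i,v_{i-2}\}$ induces a chair (the induced path on $v_i,v_{i+1},w,u$ with pendant $v_{i-2}$ at the middle vertex $w$). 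Each contradicts $(P_5,chair)$-freeness, so none of these cases occurs, and we may assume $N(D)\subseteq S_5$.

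This leaves the genuinely hard case $w\in S_5$, where no local obstruction exists: the graph induced by $C$, a single $S_5$-vertex $w$, and a pendant $u\in S_0$ is a $5$-wheel with a pendant at the hub, and a direct check shows it is both $P_5$-free and chair-free. I would therefore switch to \autoref{lem:dominated subsets}, with the cycle $C$ playing the role of the dominating set. The preparatory observation is that each $w\in N(D)$ is in fact complete to $D$: otherwise, as $D$ is connected, $w$ would be mixed on some edge $ab$ of $G[D]$ with $w\sim a$ and $w\nsim b$, and then $\{b,a,w,v_j,v_{j+2}\}$ (for any $v_j\in V(C)$) induces a chair. In particular each such $w$ is complete to $D\cup V(C)$, whence $\omega(G[D])\le\omega(G)-1\le 3$, using that $G\neq K_5$ since $G$ contains an induced $C_5$.

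The final step is to apply \autoref{lem:dominated subsets} with $X=D$: the pair $(D,C)$ is anticomplete, and $C$ is complete to $N(D)\subseteq S_5$, so it only remains to produce a set $Y$ anticomplete to $D$ and complete to $N(D)$ with $\chi(G[Y])\ge\chi(G[D])$. Taking $Y=C$ succeeds as soon as $\chi(G[D])\le\chi(C_5)=3$, and this chromatic comparison is exactly where I expect the difficulty to concentrate: $G[D]$ is a $(P_5,chair)$-free graph with $\omega(G[D])\le 3$, and the crux is ruling out $\chi(G[D])\ge 4$. I would attack it either by showing directly that $(P_5,chair)$-free graphs of clique number at most $3$ are $3$-colorable, or by strengthening $Y$: since a $C_5$ together with a vertex complete to it is a $W_5$ with $\chi(W_5)=4$, exhibiting a spare vertex $w'\in S_5\setminus N(D)$ that is complete to $N(D)$ would make $Y=C\cup\{w'\}$ have chromatic number $4\ge\chi(G[D])$ (note $\chi(G[D])\le 4$ by vertex-criticality, as $D\subsetneq V(G)$), again contradicting \autoref{lem:dominated subsets}. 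Pinning down the adjacencies inside $S_5$ that such a $w'$ requires is, I expect, the main obstacle in this claim.
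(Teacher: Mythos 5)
Your treatment of the cases $w\in S^1_3(i)\cup S^2_3(i)\cup S_4(i)$ is correct and matches the paper's proof in substance (the paper uses the same $P_5$ for $S^1_3(i)$ and a single chair $\{v_{i+1},v_i,w,v_{i-2}\}\cup\{u\}$ covering both $S^2_3$ and $S_4$; your separate witnesses are equally valid). The reduction of the remaining case to \autoref{lem:dominated subsets} with $Y=C$ is also exactly the paper's route. The genuine gap is the one you yourself flag: you never establish the condition $\chi(G[X])\le\chi(G[Y])$, i.e.\ $\chi(G[D])\le 3$. Neither of your two proposed escapes is carried out, and the first one --- that $(P_5,chair)$-free graphs of clique number at most $3$ are $3$-colorable --- is a substantial unproven assertion that you cannot simply invoke; the second requires producing a vertex $w'$ whose existence you admit you cannot pin down. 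As written, the proof is therefore incomplete.

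What is frustrating is that you already have every ingredient needed to close it in one line. You proved that each $w\in N(D)\subseteq S_5$ is complete to the component $D$ (via the chair $\{b,a,w,v_j\}\cup\{v_{j+2}\}$ on a mixed edge), but you only used this to bound $\omega(G[D])$, which is the wrong invariant. Use it to bound the chromatic number instead: $G[D\cup\{w\}]$ is a \emph{proper} induced subgraph of $G$ (it omits all of $V(C)$), so by $5$-vertex-criticality it is $4$-colorable; since $w$ is complete to $D$, this forces $\chi(G[D])\le 3=\chi(C_5)$, and \autoref{lem:dominated subsets} applies with $X=D$, $Y=C$. (In fairness, the paper is itself terse here --- it asserts that $S_0$ and $C$ satisfy the three conditions of \autoref{lem:dominated subsets} without verifying the chromatic inequality --- so your instinct that this is where the real content of the claim sits is sound; the point is only that the finish is short, not hard.)
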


\begin{proof}
   Suppose not. 
   We will first show that $N(S_0)\subseteq S_5$. 
   Since $G$ is connected, there is a pair of vertices $u$ and $v$ such that $u\in{S_0}, v\in{V(G)\backslash{S_0}}$ and $u\sim{v}$.
   If $v\in{S^1_3(i)}$ for any $i$, then $\{u,v,v_{i+1},v_{i+2},v_{i-2}\}$ induces a $P_5$, a contradiction. 
   If $v\in{S^2_3(i)\cup{S_4(i+1)}}$ for any $i$, then $\{v_{i+1},v_i,v,v_{i-2}\}$ and $\{u\}$ induce a chair, a contradiction. 
   Thus, $v$ can only belong to $S_5$. 
   Then, two nonempty disjoint subsets $S_0$ and $C$ of $V(G)$ satisfy the three conditions of \autoref{lem:dominated subsets}, 
   a contradiction. Therefore, $S_0=\varnothing$.
\end{proof}

\begin{claim}\label{clm:S13}
    $S^1_3(i)$ is clique, for all $1\leq i\leq5$.
\end{claim}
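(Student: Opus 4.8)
The plan is to argue by contradiction. Suppose $S^1_3(i)$ is not a clique, so there exist two nonadjacent vertices $u,v\in S^1_3(i)$. By definition, both $u$ and $v$ are adjacent to the three consecutive cycle vertices $v_{i-1},v_i,v_{i+1}$ and nonadjacent to the remaining two, $v_{i+2}$ and $v_{i-2}$ (all indices modulo $5$). The goal is to use this common neighborhood/non-neighborhood pattern, together with the assumption $u\nsim v$, to exhibit a forbidden induced chair on five vertices.

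The concrete step I would take is to build the $P_4$ entirely from $u$ and a short segment of the cycle that leaves $C$, namely $u\,\text{--}\,v_{i+1}\,\text{--}\,v_{i+2}\,\text{--}\,v_{i+3}$ (recall $v_{i+3}=v_{i-2}$). This is an induced $P_4$: the two cycle edges $v_{i+1}v_{i+2}$ and $v_{i+2}v_{i+3}$ come from $C$, the edge $uv_{i+1}$ holds since $v_{i+1}\in N_C(u)$, and the three potential chords all vanish because $u\nsim v_{i+2},\,u\nsim v_{i+3}=v_{i-2}$ (as $v_{i+2},v_{i-2}\notin N_C(u)$) and $v_{i+1}\nsim v_{i+3}$ (nonconsecutive on $C$). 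Now I would attach $v$ to this $P_4$: since $v_{i+1}\in N_C(v)$ we have $v\sim v_{i+1}$, while $v\nsim u$ by assumption and $v\nsim v_{i+2},\,v\nsim v_{i+3}=v_{i-2}$ again by the defining neighborhood of $S^1_3(i)$. Thus $v$ is adjacent to exactly one vertex of the $P_4$, and that vertex $v_{i+1}$ is a \emph{middle} vertex of the path $u\,\text{--}\,v_{i+1}\,\text{--}\,v_{i+2}\,\text{--}\,v_{i+3}$. Hence $\{u,v,v_{i+1},v_{i+2},v_{i+3}\}$ induces a chair, contradicting that $G$ is chair-free and forcing $u\sim v$.

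I do not expect a genuine obstacle here: once the right $P_4$ is chosen, the argument is a routine adjacency check driven entirely by the definition of $S^1_3(i)$ and the $C_5$-edges. The only point needing care is the modular index bookkeeping, in particular recognizing that $v_{i+3}$ and $v_{i-2}$ denote the same cycle vertex so that the endpoint of the $P_4$ is indeed one of the two vertices to which $u$ and $v$ are nonadjacent. (Symmetrically, the segment $u\,\text{--}\,v_{i-1}\,\text{--}\,v_{i-2}\,\text{--}\,v_{i-3}$ with $v$ attached to $v_{i-1}$ yields the same chair, so either orientation closes the argument.)
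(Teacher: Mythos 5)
Your proof is correct and is essentially the paper's own argument: the paper exhibits the same five-vertex set $\{u,v,v_{i+1},v_{i+2},v_{i-2}\}$ (with the roles of $u$ and $v$ swapped, which is immaterial by symmetry) as an induced chair. The adjacency checks you perform match the paper's implicit ones exactly.
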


\begin{proof}
    Suppose not. We assume that there are two vertices $u,v\in{S^1_3(i)}$ with $u\nsim{v}$. Then $\{v, v_{i+1},v_{i+2},v_{i-2}\}$ and $\{u\}$ induce a chair in $G$, a contradiction. 
\end{proof}

\begin{claim}\label{clm:45-3}
    Each vertex in $S_4(i)\cup{S_5}$ is either complete or anticomplete to a component of $S^2_3(i)$, for all $1\leq i\leq5$.
\end{claim}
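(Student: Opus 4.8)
The plan is to argue by contradiction. Suppose some $w \in S_4(i)\cup S_5$ is \emph{mixed} on a component $D$ of $S^2_3(i)$, i.e.\ $w$ has both a neighbor and a non-neighbor in $D$; I aim to build a chair out of $w$, two vertices of $D$, and two vertices of $C$.

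The first step is to reduce from the whole component to a single edge. Since $D$ is connected and $w$ has both a neighbor and a non-neighbor in $D$, following a path in $D$ from a neighbor of $w$ to a non-neighbor of $w$ yields two consecutive vertices that $w$ distinguishes; that is, there is an edge $xy$ of $D$ with $x,y\in S^2_3(i)$, $x\sim y$, $w\sim x$ and $w\nsim y$. This is the configuration I would work with.

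The second step is to record the adjacencies to $C$ that carry the argument. By definition of $S^2_3(i)$, both $x$ and $y$ are nonadjacent to $v_{i-1}$ and to $v_{i+1}$ (these are exactly the two cycle vertices missed by $S^2_3(i)$). On the other hand, every vertex of $S_4(i)$ and every vertex of $S_5$ is adjacent to both $v_{i-1}$ and $v_{i+1}$, so $w\sim v_{i-1}$ and $w\sim v_{i+1}$; this common feature is precisely what lets the cases $w\in S_4(i)$ and $w\in S_5$ be handled uniformly. Finally $v_{i-1}\nsim v_{i+1}$, since they are at distance $2$ on $C_5$.

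With these facts I would exhibit the chair on $\{v_{i-1},w,x,y,v_{i+1}\}$: the four vertices $v_{i-1}-w-x-y$ induce a $P_4$ (the only edges among them are the three consecutive ones, as $v_{i-1}\nsim x$, $v_{i-1}\nsim y$ and $w\nsim y$), and $v_{i+1}$ attaches only to the middle vertex $w$ (since $v_{i+1}\nsim x$, $v_{i+1}\nsim y$ and $v_{i+1}\nsim v_{i-1}$). This is exactly a chair, contradicting chair-freeness. Hence no such $w$ exists, proving the claim. The argument is short, so I do not expect a genuine obstacle; the only points requiring care are verifying that the five chosen vertices induce \emph{exactly} a chair (checking every non-edge), and noticing that $v_{i-1}$ and $v_{i+1}$ serve as a uniform ``detector'' hit by all of $S_4(i)\cup S_5$ but by no vertex of $S^2_3(i)$, which is what collapses the two cases into one.
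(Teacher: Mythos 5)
Your proof is correct and follows essentially the same route as the paper: the paper also takes an edge $uv$ of $S^2_3(i)$ distinguished by $s\in S_4(i)\cup S_5$ and exhibits the chair on $\{v_{i-1},s,u,v\}\cup\{v_{i+1}\}$, which is exactly your configuration. Your version merely spells out two details the paper leaves implicit, namely the path argument reducing ``mixed on a component'' to ``distinguishes an edge'' and the explicit verification of all non-edges.
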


\begin{proof}
    We assume that there is an edge $uv$ of $S^2_3(i)$ can be distinguished by vertex $s\in{S_4(i)\cup{S_5}}$. Without loss of generality, let $s\sim{u}$, $s\nsim{v}$. Then $\{v_{i-1},s,u,v\}$ and $\{v_{i+1}\}$ induce a chair.
\end{proof}

\begin{claim}\label{clm:other-3}
     Each vertex in $V(G)-(S^2_3(i)\cup{S_4(i)}\cup{S_5})$ is either complete or anticomplete to $S^2_3(i)$, for all $1\leq i\leq5$.
\end{claim}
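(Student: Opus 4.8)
The plan is to suppose, toward a contradiction, that some $w\in V(G)-(S^2_3(i)\cup S_4(i)\cup S_5)$ is mixed on $S^2_3(i)$, say $w\sim a$ and $w\nsim b$ with $a,b\in S^2_3(i)$, and then to exhibit an induced $P_5$ or an induced chair in every possible case. First I would record the available structure. By \autoref{clm:S0} and \autoref{clm:S12} the sets $S_0$, $S_1(j)$, $S^1_2(j)$ and $S^2_2(j)$ are all empty, and every $v_j\in V(C)$ is by definition already complete or anticomplete to $S^2_3(i)$. Hence it suffices to treat the thirteen locations $w\in S^1_3(j)$ $(1\le j\le 5)$, $w\in S^2_3(j)$ $(j\neq i)$, and $w\in S_4(j)$ $(j\neq i)$. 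Throughout I use that each vertex of $S^2_3(i)$ has the same trace on $C$: it is adjacent to $v_{i-2},v_i,v_{i+2}$ and nonadjacent to $v_{i-1},v_{i+1}$.

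The engine is two families of small forbidden configurations, distinguished by whether they are built from the non-neighbour $b$ or from the neighbour $a$. The first family uses only $w\nsim b$, turning $b$ into an endpoint or a pendant of a configuration living on $C\cup\{w\}$. Concretely, if $\{v_{i-1},v_{i+1}\}\subseteq N_C(w)$ then $w$ must miss at least one of $v_{i-2},v_{i+2}$, and one of the two orderings $b,v_{i+2},v_{i+1},w,v_{i-1}$ or $b,v_{i-2},v_{i-1},w,v_{i+1}$ induces a $P_5$; whereas if $w\sim v_{i+2},v_{i-1}$ but $w\nsim v_{i+1}$ then $\{v_{i+1},v_{i+2},w,v_{i-1},b\}$ induces a chair, with $b$ attached to the middle vertex $v_{i+2}$ (and symmetrically with the pairs $v_{i+1}/v_{i-1}$ and $v_{i+2}/v_{i-2}$ interchanged). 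A short bookkeeping check shows these two shapes cover every location of $w$ except $S^1_3(i+1)$ and $S^1_3(i-1)$.

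For the two leftover cases I would instead use $w\sim a$: when $w\in S^1_3(i+1)$ the vertices $v_{i-1},v_{i-2},a,w,v_{i+1}$ (in this order) induce a $P_5$, and $w\in S^1_3(i-1)$ is the mirror image. This asymmetry is conceptually the point: $S^1_3(i\pm1)$ is precisely where a vertex may legitimately be \emph{anticomplete} to $S^2_3(i)$, so no contradiction can be extracted from the non-neighbour $b$ alone, and one is forced to argue from $a$.

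I expect the genuine obstacle to be the dense cases $w\in S_4(i+1)$ and $w\in S_4(i-1)$, where $N_C(w)\supseteq\{v_{i-2},v_i,v_{i+2}\}=N_C(b)$. There every cycle-neighbour of $b$ is also a neighbour of $w$, so the tempting paths routed through $b$ and a shared cycle vertex all pick up a chord and collapse; the resolution is to anchor the configuration at the \emph{unique} cycle vertex that $w$ misses (namely $v_{i+1}$ for $S_4(i+1)$), using it as an endpoint of the underlying $P_4$ and letting $b$ hang off the middle, exactly as in the chair above. Once the correct shape is located in each case the adjacency checks are routine, so the real work of the proof is the casework that matches each of the thirteen positions of $w$ with one of the configurations described, rather than any single hard verification.
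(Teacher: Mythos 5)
Your proposal is correct and takes essentially the same approach as the paper: both run an exhaustive case analysis over the possible positions of the external vertex relative to $C$ and exhibit an induced $P_5$ or chair in each case, with your configurations coinciding with (or being trivial variants of) the paper's, including the same $P_5$ built from the neighbour $a$ for the exceptional classes $S^1_3(i\pm1)$. The only cosmetic difference is that you organize the cases by the trace of $w$ on $\{v_{i\pm1},v_{i\pm2}\}$ and argue by contradiction from mixedness, while the paper directly forces completeness or anticompleteness class by class.
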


\begin{proof}
By symmetry, it suffices to prove the claim for $i,i+1$ and $i+2$. Let $v\in{S^2_3(i)}$. 
If $v$ is adjacent to $s_1\in{S^1_3(i+1)}$, then $\{v_{i-1},v_{i-2},v,s_1,v_{i+1}\}$ is an induced $P_5$. 
If $v$ is not adjacent to $s_2\in{S^1_3(i)\cup{S^2_3(i+1)}\cup{S_4(i+2)}}$, then $\{v_{i-1},s_2,v_{i+1},v_{i+2},v\}$  is an induced $P_5$. 
If $v$ is not adjacent to $s_3\in{S^2_3(i+2)\cup{S_4(i+1)}}$, then $\{v_{i-1},s_3,v_{i+2},v\}$ and $\{v_{i+1}\}$ induce a chair. 
If $v$ is not adjacent to $s_4\in{S^1_3(i+2)}$, then $\{v_{i-1},v_i,v_{i+1},v\}$ and $\{s_4\}$ induce a chair. 
\end{proof}

\begin{claim}\label{clm:homo}
    Every component of $S^2_3(i)$ is a homogeneous set.
\end{claim}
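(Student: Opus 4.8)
The plan is to fix an arbitrary component $K$ of $S^2_3(i)$ and show directly that every vertex of $V(G)\setminus K$ is either complete or anticomplete to $K$; this is precisely the statement that $K$ is a homogeneous set. To do this I would partition $V(G)\setminus K$ into three regions and dispose of each using either a previous claim or the defining property of a connected component. The three regions are: (1) the vertices of $S^2_3(i)$ lying outside $K$, (2) the vertices of $S_4(i)\cup S_5$, and (3) everything else, namely $V(G)\setminus\big(S^2_3(i)\cup S_4(i)\cup S_5\big)$.

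Region (1) is immediate from the definition of a component: if $u\in S^2_3(i)\setminus K$, then $u$ lies in a different component of $G[S^2_3(i)]$, so $u$ has no neighbour in $K$, i.e.\ $u$ is anticomplete to $K$. Region (3) is handled by \autoref{clm:other-3}, which asserts that each such vertex is complete or anticomplete to the whole of $S^2_3(i)$, and hence in particular to $K\subseteq S^2_3(i)$. Region (2) is exactly the content of \autoref{clm:45-3}, which already guarantees that each vertex of $S_4(i)\cup S_5$ is complete or anticomplete to $K$.

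Since these three regions exhaust $V(G)\setminus K$ and in each case every vertex is complete or anticomplete to $K$, no vertex of $V(G)\setminus K$ is mixed on $K$, so $K$ is homogeneous. As $K$ was an arbitrary component, every component of $S^2_3(i)$ is homogeneous, which is the assertion of the claim.

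The one place I would be careful — rather than a genuine obstacle — is the interplay between \autoref{clm:45-3} and \autoref{clm:other-3}: the former controls $S_4(i)\cup S_5$ only at the level of a single component, which is exactly why the component $K$ must be fixed at the outset, whereas the latter gives the stronger global statement for the remaining vertices. The small observation that ties the cases together is that nonadjacency between sibling components of the induced subgraph $G[S^2_3(i)]$ is genuine nonadjacency in $G$, so Region (1) contributes nothing and the three regions truly cover all of $V(G)\setminus K$.
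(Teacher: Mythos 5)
Your proof is correct and follows essentially the same route as the paper, which likewise derives the claim by combining \autoref{clm:45-3} and \autoref{clm:other-3}; your version merely spells out the three-way case split (other components of $S^2_3(i)$, the set $S_4(i)\cup S_5$, and the rest) that the paper leaves implicit.
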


\begin{proof}
  By \autoref{clm:45-3} and \autoref{clm:other-3}, there is no vertex of $G\backslash S^2_3(i)$ that can distinguish an edge of $S^2_3(i)$. 
\end{proof}

Let $T_i=S^1_3(i\pm{2})\cup S^2_3(i\pm{1})\cup S^2_3(i\pm{2})$ for each $i$. 

\begin{claim}\label{clm:S4-1}
    $S_4(i)$ is complete to $T_i$, for all $1\leq i\leq5$.
\end{claim}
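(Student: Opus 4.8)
The plan is to prove the statement locally by contradiction. Fix $s\in S_4(i)$ and $w\in T_i$, and suppose $s\nsim w$; I will exhibit an induced $P_5$ or chair inside $\{s,w\}\cup V(C)$, contradicting that $G$ is $(P_5,chair)$-free. The point is that $s$ is adjacent to every cycle vertex except $v_i$ (by definition of $S_4(i)$), so the only variable left is the neighbourhood of $w$ on $C$, and that is pinned down by which piece of $T_i$ contains $w$. Thus each case reduces to reading off a cycle vertex on which $s$ and $w$ disagree and assembling it with the shared structure of $C$.

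First I would cut the work by symmetry. The reflection of the cycle that fixes $v_i$ and swaps $v_{i+1}\leftrightarrow v_{i-1}$ and $v_{i+2}\leftrightarrow v_{i-2}$ is an automorphism of $C$; it preserves $S_4(i)$ and interchanges $S^1_3(i+2)\leftrightarrow S^1_3(i-2)$, $S^2_3(i+1)\leftrightarrow S^2_3(i-1)$ and $S^2_3(i+2)\leftrightarrow S^2_3(i-2)$. Since the relabelled cycle is still an induced $C_5$ and $G$ is still $(P_5,chair)$-free, applying any argument to it handles each partner for free. This reduces the six pieces of $T_i$ to the three representatives $w\in S^2_3(i+2)$, $w\in S^1_3(i+2)$ and $w\in S^2_3(i+1)$.

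In each representative case I would write out $N_C(w)$ explicitly and produce the forbidden subgraph. If $w\in S^2_3(i+2)$, then $N_C(w)=\{v_{i-1},v_i,v_{i+2}\}$, and $v_{i-2}-s-v_{i+1}-v_i-w$ is an induced $P_5$. If $w\in S^1_3(i+2)$, then $N_C(w)=\{v_{i+1},v_{i+2},v_{i-2}\}$, and $v_i-v_{i-1}-s-v_{i+2}-w$ is an induced $P_5$. If $w\in S^2_3(i+1)$, then $N_C(w)=\{v_{i-2},v_{i-1},v_{i+1}\}$, and $\{w,v_{i-1},s,v_{i+2},v_i\}$ induces a chair, with $P_4$ equal to $w-v_{i-1}-s-v_{i+2}$ and pendant $v_i$ attached to the middle vertex $v_{i-1}$. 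In every case the cycle vertex $v_i$, which is nonadjacent to both $s$ and $w$, together with a private neighbour of $s$ on $C$ (such as $v_{i-2}$ or $v_{i-1}$) supplies the ends of the obstruction.

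The only delicate point, and hence the main (though routine) obstacle, is the index arithmetic modulo $5$: one must remember that the two ``distance-two'' cycle neighbours $v_{i+2}$ and $v_{i-2}=v_{i+3}$ are themselves \emph{adjacent} on $C$, and then check for each candidate five-set that there is no accidental chord, so that it induces \emph{exactly} a $P_5$ or a chair rather than a graph with an extra edge. Once this bookkeeping is done for the three representatives, the symmetry of the second paragraph immediately gives that $S_4(i)$ is complete to all of $T_i$.
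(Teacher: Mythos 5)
Your proof is correct and is essentially the paper's own argument: the same reflection symmetry reduces to the three cases $S^1_3(i+2)$, $S^2_3(i+1)$, $S^2_3(i+2)$, and the induced $P_5$'s and the chair you exhibit coincide (up to relabelling) with the ones in the paper. No issues.
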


\begin{proof}
By the symmetry, it suffers to prove the claim for $S^1_3(i+2)\cup S^2_3(i+1)\cup S^2_3(i+2)$.
Let $v\in{S_4(i)}$.
If $v$ is not adjacent to $s_1\in{S^1_3(i+2})$, then $\{v_i,v_{i-1},v,v_{i+2},s_1\}$ induces a $P_5$, a contradiction.
If $v$ is not adjacent to $s_2\in{S^2_3(i+1)}$, then $\{v_i,v_{i-1},v,v_{i+2}\}$ and $\{s_2\}$ induce a chair, a contradiction. 
If $v$ is not adjacent to $s_3\in{S^2_3(i+2})$, then $\{s_3,v_i,v_{i+1},v,v_{i-2}\}$ induces a $P_5$, a contradiction.
\end{proof}

\begin{claim}\label{clm:S4-2}
    For each $s\in{S^1_3(i)\cup{S_4(i\pm{2})}}$, $u,v\in S_4(i)$ with $uv\notin{E}$, $s$ cannot mix on $\{u,v\}$, for all $1\leq i\leq5$.
\end{claim}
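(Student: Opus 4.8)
The plan is to argue by contradiction: suppose some $s\in S^1_3(i)\cup S_4(i\pm 2)$ is mixed on a non-edge $uv$ of $S_4(i)$, and derive a forbidden induced $P_5$. By swapping the roles of $u$ and $v$ we may assume $s\sim u$ and $s\nsim v$. The first fact I would record is that every candidate $s$ is adjacent to $v_i$: a vertex of $S^1_3(i)$ has $N_C(s)=\{v_{i-1},v_i,v_{i+1}\}$, while a vertex of $S_4(i\pm 2)$ misses only $v_{i\pm 2}$ on $C$ and hence is adjacent to $v_i$. At the same time $u,v\in S_4(i)$ are adjacent to each of $v_{i-2},v_{i-1},v_{i+1},v_{i+2}$ and non-adjacent to $v_i$.

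The second step is to select a cycle vertex $w\in\{v_{i-2},v_{i+2}\}$ with $w\nsim s$. Such a $w$ always exists, since $s$ fails to be adjacent to at least one of $v_{i-2},v_{i+2}$: a vertex of $S^1_3(i)$ is adjacent to neither, a vertex of $S_4(i+2)$ misses $v_{i+2}$, and a vertex of $S_4(i-2)$ misses $v_{i-2}$. With this choice I claim that $v\text{--}w\text{--}u\text{--}s\text{--}v_i$ is an induced $P_5$, which contradicts $(P_5,chair)$-freeness.

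The verification is then a direct check against the definitions. The four path edges are $v\sim w$ and $w\sim u$ (both because $w\in\{v_{i-2},v_{i+2}\}\subseteq N_C(u)=N_C(v)$), $u\sim s$ (our assumption), and $s\sim v_i$ (the first observation). The six remaining pairs are non-edges: $uv$ and $sv$ by hypothesis; $uv_i$ and $vv_i$ because $u,v\in S_4(i)$; $wv_i$ because $w$ and $v_i$ lie at distance two on $C$; and $ws$ by the choice of $w$. Hence the five vertices induce a $P_5$.

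The only point that needs care — and the step I would treat as the crux — is the coordinated choice of $w$ according to the type of $s$. The three families for $s$ differ precisely in which of $v_{i-2},v_{i+2}$ is free to serve as the second vertex of the path, so $w$ must be chosen to match the case rather than fixed in advance; once $w$ is chosen correctly, everything reduces to reading off adjacencies from the definitions of $S^1_3$ and $S_4$.
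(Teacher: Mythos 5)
Your proof is correct and is essentially the paper's argument: the paper exhibits the same induced $P_5$ on $\{v_i,s,u,w,v\}$, reducing by symmetry to $s\in S^1_3(i)\cup S_4(i+2)$ with $w=v_{i+2}$, while you handle the symmetric cases explicitly by choosing $w\in\{v_{i-2},v_{i+2}\}$ nonadjacent to $s$. No substantive difference.
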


\begin{proof}
    By the symmetry, it suffers to prove the claim for $S^1_3(i)\cup S_4(i+2)$.
    Let $s\in S^1_3(i)\cup S_4(i+2)$ with $s\sim{u}$, $s\nsim{v}$ , then $\{v_i,s,u,v_{i+2},v\}$ induces a $P_5$. 
\end{proof}
      
     Let $R_i=S^1_3(i\pm{1})\cup S^2_3(i)\cup S_4(i\pm{1})\cup S_5$, for each $i$. 
      
\begin{claim}\label{clm:S4-3}
    For each $s\in{R_i}$, $u,v\in S_4(i)$ with $uv\notin{E}$, $s$ is adjacent to at least one of $\{u,v\}$, for all $1\leq i\leq5$.
\end{claim}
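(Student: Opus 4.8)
The plan is to argue by contradiction: suppose some $s\in R_i$ is nonadjacent to both $u$ and $v$, and exhibit an induced chair on five vertices. The crucial observation is that \emph{every} vertex of $R_i$ is adjacent to $v_i$; indeed, reading the six types $S^1_3(i\pm1)$, $S^2_3(i)$, $S_4(i\pm1)$, $S_5$ against their definitions, each has $v_i$ in its $C$-neighborhood (this is exactly why $R_i$ was grouped the way it was). Moreover, each such vertex is adjacent to at least one of $v_{i+2}$ and $v_{i-2}$, and I will build the chair around whichever of these two is available.

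Since $S_4(i)$ and $R_i$ are both invariant under the reflection of $C$ that fixes $v_i$ and swaps $v_{i+1}\leftrightarrow v_{i-1}$ (hence $v_{i+2}\leftrightarrow v_{i-2}$), I may assume without loss of generality that $s\sim v_{i+2}$. Concretely, this reflection pairs $S^1_3(i+1)$ with $S^1_3(i-1)$ and $S_4(i+1)$ with $S_4(i-1)$ while fixing $S^2_3(i)$ and $S_5$, so it suffices to treat $s\in S^1_3(i+1)\cup S^2_3(i)\cup S_4(i+1)\cup S_5$. For each of these four sets one reads off directly from the definition that $s\sim v_i$ and $s\sim v_{i+2}$: for instance $S_4(i+1)$ has $C$-neighborhood $\{v_{i-2},v_{i-1},v_i,v_{i+2}\}$, and $S^2_3(i)$ has $\{v_{i-2},v_i,v_{i+2}\}$.

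With $s\sim v_i$ and $s\sim v_{i+2}$ in hand, I claim that $\{v_i,s,v_{i+2},u,v\}$ induces a chair. The edges present are exactly $v_is$, $sv_{i+2}$, $v_{i+2}u$ and $v_{i+2}v$: the first two hold by the previous paragraph, and $v_{i+2}\sim u,v$ because $u,v\in S_4(i)$. The remaining pairs are nonadjacent: $v_i\nsim v_{i+2}$ (they are at distance two on $C$), $v_i\nsim u,v$ (as $u,v\in S_4(i)$ miss only $v_i$ wait—more precisely $u,v$ are nonadjacent to $v_i$), $u\nsim v$ by hypothesis, and $s\nsim u,v$ by the contradiction assumption. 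Thus $v_i\,s\,v_{i+2}\,u$ is an induced $P_4$ with $v$ attached to the middle vertex $v_{i+2}$, which is exactly a chair; this is the desired contradiction, so $s$ must be adjacent to $u$ or $v$.

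The only real work lies in the bookkeeping of the first two paragraphs: checking that $v_i$ belongs to the $C$-neighborhood of all six subsets comprising $R_i$, and that one of $v_{i\pm2}$ does as well. Once the reflection collapses the six subsets to the four representatives, the adjacencies $s\sim v_i,v_{i+2}$ are immediate from the definitions, so I do not anticipate a genuine obstacle. The main point to get right is the index arithmetic modulo $5$, which is what guarantees that the \emph{single} configuration $\{v_i,s,v_{i+2},u,v\}$ yields a chair uniformly across every case rather than forcing a separate argument per subset.
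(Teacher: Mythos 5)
Your proof is correct and follows essentially the same strategy as the paper: assume $s$ misses both $u$ and $v$, then exhibit an induced chair built from $s$, $u$, $v$ and two cycle vertices using the fact that every vertex of $R_i$ sees $v_i$ and one of $v_{i\pm2}$. The only difference is that the paper splits into two cases with two chair configurations (one for $s\in S_5$, another for the rest of $R_i$), whereas your single configuration $\{v_i,s,v_{i+2},u,v\}$ --- which is exactly the paper's $S_5$ chair --- works uniformly across all of $R_i$ after the reflection.
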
 

\begin{proof}
   By the symmetry, it suffers to prove the claim for $S^1_3(i+1)\cup S^2_3(i)\cup S_4(i+1)\cup S_5$.
   Let $s_1\in{S^1_3(i+1)\cup S^2_3(i)\cup S_4(i-1)}$, if $s_1$ is nonadjacent to both $\{u,v\}$, then $\{v,v_{i-1},v_i,s_1\}$ and $\{u\}$ induce a chair.
   Let $s_2\in{S_5}$, if $s_2$ is nonadjacent to both $\{u,v\}$, then $\{v_i,s_2,v_{i-2},v\}$ and $\{u\}$ induce a chair.   
\end{proof}

\begin{claim}\label{clm:S4(i+2)}
  Every vertex in $S_4(i\pm{2})$ is complete to $x,y\in{S_4(i)}$ with $xy\notin{E}$.
\end{claim}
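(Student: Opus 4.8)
The plan is to argue by contradiction and to exploit the near-symmetry between a vertex of $S_4(i)$ and a vertex of $S_4(i+2)$ in their attachments to $C$. First I would reduce to one side: a reflection of $C$ fixing $v_i$ interchanges $S_4(i+2)$ and $S_4(i-2)$ while preserving $S_4(i)$ and both forbidden graphs, so it suffices to treat a vertex $s\in S_4(i+2)$. I then record the traces on $C$: such an $s$ is adjacent to $v_i,v_{i+1},v_{i-1},v_{i-2}$ and misses $v_{i+2}$, whereas $x,y\in S_4(i)$ are adjacent to $v_{i-2},v_{i-1},v_{i+1},v_{i+2}$ and miss $v_i$. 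Thus $s$, $x$, $y$ share the neighbours $v_{i-2},v_{i-1},v_{i+1}$ on $C$, while $v_i$ separates $s$ from $\{x,y\}$ and $v_{i+2}$ separates $\{x,y\}$ from $s$; this asymmetry is what drives the argument.

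Assuming the conclusion fails, the symmetry $x\leftrightarrow y$ lets me assume $s\nsim x$, and I split on whether $s$ is adjacent to $y$. If $s\nsim y$, I expect the four vertices $v_i,s,v_{i-2},x$ to induce a $P_4$ in this order (the only edges among them being consecutive, since $v_i\nsim v_{i-2}$, $v_i\nsim x$ and $s\nsim x$), with $y$ adjacent to $v_{i-2}$ but to none of $v_i,s,x$; then $\{v_i,s,v_{i-2},x\}$ together with $y$ induces a chair, a contradiction. This case is essentially a repackaging of the chair arguments already used for \autoref{clm:S4-3}.

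The genuine obstacle is the case $s\sim y$, where these same five vertices no longer form a chair because $y$ would attach to two vertices of the $P_4$. Here a chair is unavailable, and the correct move is to produce an induced $P_5$ instead: I would verify that $v_i,s,y,v_{i+2},x$ induce a $P_5$ in this order, using $s\nsim v_{i+2}$, $s\nsim x$, $x\nsim y$, $v_i\nsim y$, $v_i\nsim v_{i+2}$ and $v_i\nsim x$. Finding this particular $P_5$, and in particular routing it through $v_{i+2}$, the unique cycle vertex distinguishing $\{x,y\}$ from $s$, is the step needing the most care; it is exactly what upgrades the ``adjacent to at least one of $x,y$'' conclusion of \autoref{clm:S4-3} to the ``complete to $\{x,y\}$'' statement claimed here. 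Having excluded $s\nsim x$ in both cases, running the argument once more with $x$ and $y$ interchanged gives $s\sim x$ and $s\sim y$, which completes the proof.
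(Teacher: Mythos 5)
Your proof is correct and follows essentially the same route as the paper: the doubly-nonadjacent case is handled by the identical chair $\{v_i,s,v_{i-2},x\}$ plus $y$, and your $P_5$ on $v_i,s,y,v_{i+2},x$ in the mixed case is exactly the $P_5$ the paper uses to prove \autoref{clm:S4-2}, which it then cites instead of re-deriving. The only difference is that you inline that step rather than appeal to the earlier claim.
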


\begin{proof}
     By symmetry, let $v\in{S_4(i+2)}$. 
      $v$ can not mix on $x,y$ by \autoref{clm:S4-2}.
      If $v\nsim x$ and $v\nsim y$, $\{v_i,v,v_{i-2},x\}$ and $\{y\}$ induce a chair. Then $v$ is complete to $\{x,y\}$.
\end{proof}

\subsection{Proof of \autoref{thm:containing a C5}}

Let graph family $\mathcal{F}=\{K_5, W, P, Q_1, Q_2, Q_3\}$ (see \autoref{fig:F1F2}).
The adjacency lists of $\mathcal{F}$ are given in the Appendix.  
It is routine to verify that every graph in $\mathcal{F}$ is a 5-vertex-critical $(P_5, chair)$-free graph.

\begin{proof}[Proof of \autoref{thm:containing a C5}]
Let $G$ be a 5-vertex-critical $(P_5,chair)$-free graph.
If $G$ contains a induced $F\in{\mathcal{F}}$, then $G$ is isomorphic to $F$ since $G$ is 5-vertex-critical. 
Therefore, we may assume that $G$ is $\mathcal{F}$-free.

 By \autoref{clm:S12} and \autoref{clm:S0}, $G$ has a finite order if and only if $S_3\cup{S_4}\cup{S_5}$ has finite size.

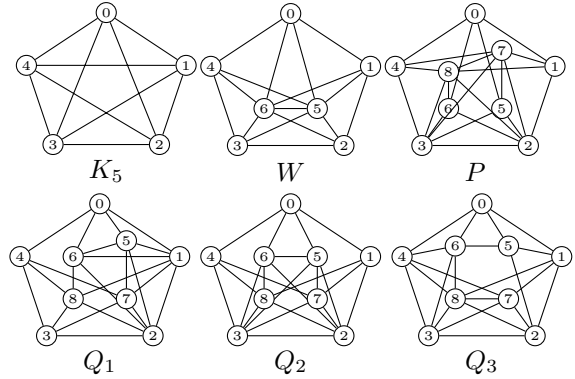
\begin{figure}[tb]
\centering
\begin{subfigure}{0.2\textwidth}
\centering
\begin{tikzpicture}[scale=0.7]
\tikzstyle{vertex}=[draw, circle, fill=white!100, minimum width=1pt,inner sep=1pt]

\node [vertex] (v1) at (0,1) {\tiny $0$};
\node [vertex] (v2) at (1.5,0) {\tiny $1$};
\node [vertex] (v3) at (1,-1.5) {\tiny $2$};
\node [vertex] (v4) at (-1,-1.5) {\tiny $3$};
\node [vertex] (v5) at (-1.5,0) {\tiny $4$};
\draw (v1)--(v2)--(v3)--(v4)--(v5)--(v1);
\draw (v1)--(v4)--(v2)--(v5)--(v3)--(v1);

\node at (0,-2) {$K_5$};
\end{tikzpicture}
\end{subfigure}%
\begin{subfigure}{0.2\textwidth}
\centering
\begin{tikzpicture}[scale=0.7]
\tikzstyle{vertex}=[draw, circle, fill=white!100, minimum width=1pt,inner sep=1pt]

\node [vertex] (v1) at (0,1) {\tiny $0$};
\node [vertex] (v2) at (1.5,0) {\tiny $1$};
\node [vertex] (v3) at (1,-1.5) {\tiny $2$};
\node [vertex] (v4) at (-1,-1.5) {\tiny $3$};
\node [vertex] (v5) at (-1.5,0) {\tiny $4$};
\draw (v1)--(v2)--(v3)--(v4)--(v5)--(v1);
\node [vertex] (x) at (0.5,-0.8) {\tiny $5$};
\draw (v1)--(x)  (v2)--(x) (v3)--(x) (v4)--(x) (v5)--(x);
\node [vertex] (y) at (-0.5,-0.8) {\tiny $6$};
\draw (v1)--(y)  (v2)--(y) (v3)--(y) (v4)--(y) (v5)--(y) (x)--(y);

\node at (0,-2) {$W$};
\end{tikzpicture}
\end{subfigure}%
\begin{subfigure}{0.2\textwidth}
\centering
\begin{tikzpicture}[scale=0.7]
\tikzstyle{vertex}=[draw, circle, fill=white!100, minimum width=1pt,inner sep=1pt]

\node [vertex] (v1) at (0,1) {\tiny $0$};
\node [vertex] (v2) at (1.5,0) {\tiny $1$};
\node [vertex] (v3) at (1,-1.5) {\tiny $2$};
\node [vertex] (v4) at (-1,-1.5) {\tiny $3$};
\node [vertex] (v5) at (-1.5,0) {\tiny $4$};
\draw (v1)--(v2)--(v3)--(v4)--(v5)--(v1);

\node[vertex] (y) at (0.5,-0.8) {\tiny $5$};
\draw (v1)--(y)  (v3)--(y) (v4)--(y);

\node[vertex] (z) at (-0.5,-0.8) {\tiny $6$};
\draw (v1)--(z) (v3)--(z)  (v4)--(z);

\node[vertex] (x) at (0.5,0.3) {\tiny $7$};
\draw (v2)--(x)  (v3)--(x) (v4)--(x) (v5)--(x);

\node[vertex] (t) at (-0.5,-0.1) {\tiny $8$};
\draw (v2)--(t) (v3)--(t)  (v4)--(t) (v5)--(t);

\draw (y)--(x)--(t)--(z);

\node at (0,-2) {$P$};
\end{tikzpicture}
\end{subfigure}

\begin{subfigure}{0.2\textwidth}
\centering
\begin{tikzpicture}[scale=0.7]
\tikzstyle{vertex}=[draw, circle, fill=white!100, minimum width=1pt,inner sep=1pt]

\node [vertex] (v1) at (0,1) {\tiny $0$};
\node [vertex] (v2) at (1.5,0) {\tiny $1$};
\node [vertex] (v3) at (1,-1.5) {\tiny $2$};
\node [vertex] (v4) at (-1,-1.5) {\tiny $3$};
\node [vertex] (v5) at (-1.5,0) {\tiny $4$};
\draw (v1)--(v2)--(v3)--(v4)--(v5)--(v1);

\node[vertex] (y) at (0.5,0.3) {\tiny $5$};
\draw (v1)--(y)  (v2)--(y) (v3)--(y);

\node[vertex] (z) at (-0.5,0) {\tiny $6$};
\draw (v1)--(z) (v2)--(z)  (v3)--(z);

\node[vertex] (x) at (0.5,-0.8) {\tiny $7$};
\draw (v2)--(x)  (v3)--(x) (v4)--(x) (v5)--(x);

\node[vertex] (t) at (-0.5,-0.8) {\tiny $8$};
\draw (v2)--(t) (v3)--(t)  (v4)--(t) (v5)--(t);

\draw (x)--(y)--(z)--(t);

\node at (0,-2) {$Q_1$};
\end{tikzpicture}
\end{subfigure}
\begin{subfigure}{0.2\textwidth}
\centering
\begin{tikzpicture}[scale=0.7]
\tikzstyle{vertex}=[draw, circle, fill=white!100, minimum width=1pt,inner sep=1pt]

\node [vertex] (v1) at (0,1) {\tiny $0$};
\node [vertex] (v2) at (1.5,0) {\tiny $1$};
\node [vertex] (v3) at (1,-1.5) {\tiny $2$};
\node [vertex] (v4) at (-1,-1.5) {\tiny $3$};
\node [vertex] (v5) at (-1.5,0) {\tiny $4$};
\draw (v1)--(v2)--(v3)--(v4)--(v5)--(v1);

\node[vertex] (y) at (0.5,0) {\tiny $5$};
\draw (v1)--(y)  (v3)--(y) (v4)--(y);

\node[vertex] (z) at (-0.5,0) {\tiny $6$};
\draw (v1)--(z) (v3)--(z)  (v4)--(z);

\node[vertex] (x) at (0.5,-0.8) {\tiny $7$};
\draw (v2)--(x)  (v3)--(x) (v4)--(x) (v5)--(x);

\node[vertex] (t) at (-0.5,-0.8) {\tiny $8$};
\draw (v2)--(t) (v3)--(t)  (v4)--(t) (v5)--(t);

\draw (x)--(y)--(z)--(t);

\node at (0,-2) {$Q_2$};
\end{tikzpicture}
\end{subfigure}
\begin{subfigure}{0.2\textwidth}
\centering
\begin{tikzpicture}[scale=0.7]
\tikzstyle{vertex}=[draw, circle, fill=white!100, minimum width=1pt,inner sep=1pt]

\node [vertex] (v1) at (0,1) {\tiny $0$};
\node [vertex] (v2) at (1.5,0) {\tiny $1$};
\node [vertex] (v3) at (1,-1.5) {\tiny $2$};
\node [vertex] (v4) at (-1,-1.5) {\tiny $3$};
\node [vertex] (v5) at (-1.5,0) {\tiny $4$};
\draw (v1)--(v2)--(v3)--(v4)--(v5)--(v1);

\node[vertex] (y) at (0.5,0.2) {\tiny $5$};
\draw (v1)--(y)  (v3)--(y) (v2)--(y);

\node[vertex] (z) at (-0.5,0.2) {\tiny $6$};
\draw (v1)--(z) (v5)--(z)  (v4)--(z);

\node[vertex] (x) at (0.5,-0.8) {\tiny $7$};
\draw (v2)--(x)  (v3)--(x) (v4)--(x) (v5)--(x);

\node[vertex] (t) at (-0.5,-0.8) {\tiny $8$};
\draw (v2)--(t) (v3)--(t)  (v4)--(t) (v5)--(t);

\draw (x)--(t)--(z)--(y);

\node at (0,-2) {$Q_3$};
\end{tikzpicture}
\end{subfigure}
\caption{Graph Family $\mathcal{F}$.}
\label{fig:F1F2}
\end{figure}

\begin{claim}\label{clm:S13<3}
    $|S^1_3(i)|\leq2$, for all $1\leq i\leq5$.
\end{claim}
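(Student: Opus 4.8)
The plan is to derive a forbidden $K_5$ from a hypothetically large clique inside $S^1_3(i)$. First I would record the one ambient fact I need: since $G$ is $5$-vertex-critical and contains an induced $C_5$ (so in particular $G \ne K_5$), any copy of $K_5$ in $G$ would be a proper induced subgraph with chromatic number $5 = \chi(G)$, contradicting vertex-criticality. Equivalently, this is already subsumed by the standing assumption that $G$ is $\mathcal{F}$-free, because $K_5 \in \mathcal{F}$. Hence $G$ is $K_5$-free, and this is the only external input required.

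Next I would argue by contradiction: suppose $|S^1_3(i)| \ge 3$. By \autoref{clm:S13}, $S^1_3(i)$ is a clique, so I may pick three pairwise adjacent vertices $x,y,z \in S^1_3(i)$. By the definition of $S^1_3(i)$, each of $x,y,z$ is adjacent to all three of $v_{i-1}, v_i, v_{i+1}$, and in particular to both $v_{i-1}$ and $v_i$.

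Now I would simply exhibit the clique. Consider $\{x, y, z, v_{i-1}, v_i\}$: the vertices $x,y,z$ are pairwise adjacent; each is adjacent to both $v_{i-1}$ and $v_i$; and $v_{i-1}v_i$ is an edge of $C$. Thus $\{x,y,z,v_{i-1},v_i\}$ induces a $K_5$, contradicting the $K_5$-freeness of $G$. Therefore $|S^1_3(i)| \le 2$, and by the symmetry of the labelling of $C$ the same bound holds for every $1 \le i \le 5$.

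There is essentially no obstacle here: the entire content is recognizing that three mutually adjacent vertices of $S^1_3(i)$, together with any cycle-edge among their common neighbours $v_{i-1}, v_i, v_{i+1}$, force a $K_5$. The only thing to be careful about is to invoke $K_5$-freeness legitimately, i.e.\ to note that it is available precisely because we have already reduced to the $\mathcal{F}$-free case, rather than to attempt a $P_5$- or chair-based forbidden-subgraph argument, which would be unnecessarily roundabout for this particular bound.
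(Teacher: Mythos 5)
Your proof is correct and is essentially identical to the paper's: the paper also observes that if $|S^1_3(i)|\ge 3$ then, by \autoref{clm:S13}, $S^1_3(i)$ together with an edge of $C$ among its common neighbours (the paper uses $\{v_i,v_{i+1}\}$, you use $\{v_{i-1},v_i\}$) induces a $K_5$, contradicting $\mathcal{F}$-freeness. The choice of cycle edge is immaterial.
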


\begin{proof}
	If $|S^1_3(i)|\ge 3$, then $S^1_3(i)\cup \{v_i,v_{i+1}\}$ contains a $K_5$ by \autoref{clm:S13}, a contradiction.
\end{proof}

\begin{claim}\label{clm:clique}
   $\chi(S^2_3(i)\cup{S_4(i)}\cup{S_5})\leq 2$, for all $1\leq i\leq5$. 
\end{claim}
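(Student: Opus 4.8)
The plan is to show that the induced subgraph on $H:=S^2_3(i)\cup S_4(i)\cup S_5$ is bipartite. The starting point is to locate, inside the cycle $C$, a single edge that is complete to $H$. Reading off the definitions of the three sets, the common cycle-neighbours of \emph{every} vertex of $H$ are exactly $v_{i-2}$ and $v_{i+2}$: indeed $N_C$ of a vertex of $S^2_3(i)$ is $\{v_{i-2},v_i,v_{i+2}\}$, of $S_4(i)$ is $\{v_{i-2},v_{i-1},v_{i+1},v_{i+2}\}$, and of $S_5$ is $V(C)$, and the intersection of these is $\{v_{i-2},v_{i+2}\}$ (the vertex $v_i$ drops out because $v_i\notin N_C(s)$ for $s\in S_4(i)$). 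Moreover $v_{i-2}\sim v_{i+2}$, since their indices differ by $4\equiv-1\pmod 5$. Hence $\{v_{i-2},v_{i+2}\}$ is an edge complete to $H$.

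With this edge in hand, I would rule out the two cheapest obstructions to bipartiteness. First, $H$ is triangle-free: if $\{a,b,c\}\subseteq H$ were a triangle, then $\{a,b,c,v_{i-2},v_{i+2}\}$ would induce a $K_5$, because $a,b,c$ are pairwise adjacent, $v_{i-2}\sim v_{i+2}$, and both $v_{i-2},v_{i+2}$ are complete to $\{a,b,c\}$; this contradicts the assumption that $G$ is $\mathcal{F}$-free. Second, $H$ has no induced $C_5$: if $d_1d_2d_3d_4d_5$ were an induced $C_5$ inside $H$, then adjoining the two adjacent ``apexes'' $v_{i-2},v_{i+2}$, each complete to all five $d_j$, produces precisely the graph $W\in\mathcal{F}$ (a $C_5$ with two adjacent vertices each complete to it), again a contradiction.

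To finish, I would upgrade ``triangle-free and induced-$C_5$-free'' to ``bipartite'' using $P_5$-freeness. Here I invoke the standard fact that a shortest odd cycle of any graph is chordless, hence induced. Since $H$ is an induced subgraph of the $P_5$-free graph $G$, it contains no induced $C_\ell$ with $\ell\ge 6$ (five consecutive vertices of such a cycle would form an induced $P_5$). Consequently a shortest odd cycle of $H$, being induced and odd, has length $3$ or $5$; both are excluded by the previous paragraph. Therefore $H$ contains no odd cycle at all and is bipartite, giving $\chi(H)\le 2$, as required.

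The one genuinely non-routine point is this last step: triangle-freeness by itself does not force bipartiteness, and the gap is closed only because $P_5$-freeness caps the length of any induced (hence of a shortest odd) cycle at $5$, after which the explicit $\mathcal{F}$-obstructions $K_5$ and $W$ eliminate the two remaining short odd cycles. Everything else reduces to routine checks against the definitions of $S^2_3(i)$, $S_4(i)$ and $S_5$, and the same argument applies verbatim for each $1\le i\le 5$.
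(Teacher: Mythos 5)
Your proof is correct, but it takes a genuinely different route from the paper. The paper's argument is a one-line criticality argument: since $\{v_{i-2},v_{i+2}\}$ is an edge complete to $H=S^2_3(i)\cup S_4(i)\cup S_5$, the set $H\cup\{v_{i-2},v_{i+2}\}$ induces a \emph{proper} subgraph of $G$ (it misses $v_i$), so by $5$-vertex-criticality it is $4$-colorable, forcing $\chi(H)\le 2$ because the two adjacent dominating vertices $v_{i-2},v_{i+2}$ consume two colors. You instead prove bipartiteness structurally: the same dominating edge turns a triangle in $H$ into a $K_5$ and an induced $C_5$ in $H$ into a $W$, and $P_5$-freeness excludes induced cycles of length at least $6$, so a shortest odd cycle (which is always induced) cannot exist. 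Both arguments are sound in context (the $\mathcal{F}$-free assumption is in force where the claim is stated). The paper's proof is shorter and does not rely on $\mathcal{F}$-freeness at all, only on criticality; yours is self-contained at the level of forbidden subgraphs and makes explicit \emph{why} the graphs $K_5$ and $W$ were placed in $\mathcal{F}$, but it is longer and leans on the standard (and correctly justified) fact that a shortest odd cycle is chordless.
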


\begin{proof}
    If $\chi({S^2_3(i)\cup{S_4(i)}\cup{S_5}})\geq3$, then the proper subgraph $S^2_3(i)\cup{S_4(i)}\cup{S_5}\cup \{v_{i-2},v_{i+2}\}$ 
    has chromatic number at least 5, contradicting that $G$ is 5-vertex-critical.
\end{proof}

\begin{claim}\label{clm:S5}
    $S_5$ is an independent set.    
\end{claim}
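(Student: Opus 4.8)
The plan is to argue by contradiction, exploiting the fact that $G$ has been reduced to an $\mathcal{F}$-free graph at the outset of this proof; in particular $G$ contains no induced copy of the graph $W$. First I would suppose that $S_5$ is not independent, so there exist two distinct vertices $u,v\in S_5$ with $u\sim v$. By the definition of $S_5$, both $u$ and $v$ satisfy $N_C(u)=N_C(v)=V(C)$, i.e.\ each is complete to the whole cycle $C=v_1v_2v_3v_4v_5$.

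Next I would examine the induced subgraph $G[\{v_1,v_2,v_3,v_4,v_5,u,v\}]$. On the five cycle vertices this is exactly the induced $C_5$; the two extra vertices $u,v$ are each complete to $\{v_1,\dots,v_5\}$, and we have assumed $u\sim v$. The adjacencies among these seven vertices are thus completely determined, and they coincide precisely with the adjacency structure of $W$ in \autoref{fig:F1F2} (two mutually adjacent vertices, each joined to every vertex of a $C_5$). Hence $G[\{v_1,\dots,v_5,u,v\}]\cong W$, giving an induced $W$ in $G$ and contradicting that $G$ is $\mathcal{F}$-free. Therefore no two vertices of $S_5$ can be adjacent, so $S_5$ is independent.

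I do not expect any genuine obstacle here: the entire argument rests on recognizing that $W$ is exactly the join of a single edge with an induced $C_5$, so that an edge inside $S_5$ automatically produces a forbidden $W$ together with $C$. The only point requiring care is the verification that the seven-vertex configuration matches $W$ and not some other member of $\mathcal{F}$ (for instance, one should note that two adjacent vertices of $S_5$ together with two consecutive cycle vertices form only a $K_4$, not a $K_5$, since the $C_5$ is induced, so it is indeed $W$ rather than $K_5$ that is forced). Once this identification is made, the claim follows immediately from the standing $\mathcal{F}$-freeness assumption.
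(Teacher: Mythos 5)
Your proof is correct and follows exactly the paper's argument: two adjacent vertices of $S_5$ together with the induced $C_5$ form an induced copy of $W$ (the join of $K_2$ with $C_5$), contradicting the standing assumption that $G$ is $\mathcal{F}$-free. The paper states this in one line; you have simply verified the isomorphism in more detail.
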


\begin{proof}
    If there are two adjacent vertices $u,v\in{S_5}$, then $G$ contains a $W\in \mathcal{F}$, a contradiction. 
\end{proof}

\begin{claim}\label{clm:k1k2}
    Every homogeneous component of $S^2_3(i)$ or ${S_4(i)}$ is isomorphic to $K_1$ or $K_2$.
\end{claim}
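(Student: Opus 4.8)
The plan is to show that any homogeneous component $H$ in question is simultaneously connected, bipartite, $P_5$-free, and free of comparable vertices, and then to argue that these four constraints together force $|V(H)|\le 2$. First I would collect the relevant properties. A component is connected by definition, and as an induced subgraph of $G$ it is $P_5$-free. Since $H\subseteq S^2_3(i)\cup S_4(i)\cup S_5$, \autoref{clm:clique} gives $\chi(H)\le 2$, so $H$ is bipartite and in particular triangle-free. Finally, $H$ contains no comparable vertices: if $x,y\in H$ are nonadjacent with $N_H(x)\subseteq N_H(y)$, then homogeneity of $H$ forces every vertex outside $H$ to be complete or anticomplete to $H$, whence $N_G(x)\setminus H=N_G(y)\setminus H$; together with $N_H(x)\subseteq N_H(y)$ this gives $N_G(x)\subseteq N_G(y)$, making $x,y$ a comparable pair in the vertex-critical graph $G$, which is impossible. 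For $S^2_3(i)$ every component is already homogeneous by \autoref{clm:homo}, while for $S_4(i)$ we only treat the homogeneous components, so the argument is uniform.

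Then I would assume for contradiction that $|V(H)|\ge 3$ and extract a $P_5$. A connected graph on at least three vertices has a vertex $w$ of degree at least two; let $x,y$ be two neighbors of $w$. Placing $w$ in one part $A$ of the bipartition, both $x,y$ lie in the opposite part $B$, so $x\nsim y$. By non-comparability there exist $a\in N_H(x)\setminus N_H(y)$ and $b\in N_H(y)\setminus N_H(x)$, and since $a\sim x$ and $b\sim y$ both $a,b$ lie in $A$. A quick parity check shows $a,x,w,y,b$ are five distinct vertices and that the only edges among them are $ax,xw,wy,yb$: the pairs $aw$, $ab$, $wb$ are non-edges because all three vertices lie in $A$, while $ay$ and $xb$ are non-edges by the defining choices of $a$ and $b$, and $xy$ is a non-edge by bipartiteness. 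Hence $a\text{-}x\text{-}w\text{-}y\text{-}b$ is an induced $P_5$ inside $H\subseteq V(G)$, contradicting that $G$ is $P_5$-free. Therefore $|V(H)|\le 2$, and since $H$ is connected it is $K_1$ or $K_2$.

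The only point requiring care — and, I expect, the sole place a careless writeup could slip — is verifying that $a,x,w,y,b$ are genuinely five distinct vertices and that each of the non-edges $aw,ab,wb,ay,xb,xy$ really holds. Every one of these facts follows immediately either from the bipartition (same-part pairs are nonadjacent) or from the membership relations defining $a$ and $b$ (e.g.\ $w\ne b$ because $w\sim x$ but $b\nsim x$), so there is no genuine obstacle, only bookkeeping. I would state the parity placement once and let it settle all six non-edges and the distinctness simultaneously.
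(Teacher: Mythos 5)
Your proof is correct, but it reaches the conclusion by a genuinely different mechanism than the paper. The paper's proof is a recoloring argument: after establishing that the component $K$ is bipartite with parts $X,Y$, it picks an edge $xy$, uses $5$-vertex-criticality to get a $4$-coloring of $G-((X\cup Y)\setminus\{x,y\})$, and then extends that coloring to all of $K$ by giving $X$ the color of $x$ and $Y$ the color of $y$; homogeneity (\autoref{clm:homo}) guarantees the extension stays proper, contradicting $\chi(G)=5$. You instead invoke criticality only through the no-comparable-vertices fact, lift $N_H$-comparability to $N_G$-comparability via homogeneity, and then extract an explicit induced $P_5$ ($a$-$x$-$w$-$y$-$b$) from any connected bipartite component on at least three vertices with no comparable pair; your bookkeeping of the distinctness and the six non-edges is right, and your appeal to \autoref{clm:clique} for bipartiteness is legitimate and non-circular (the paper re-derives bipartiteness from $K_5$-, $W$- and $P_5$-freeness instead). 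What each approach buys: the paper's recoloring is shorter once homogeneity and bipartiteness are in hand and does not need $P_5$-freeness at the final step, whereas yours isolates a clean structural fact --- a connected, bipartite, $P_5$-free graph with no comparable pair has at most two vertices --- and uses homogeneity only to transfer comparability from $H$ to $G$. Both arguments use homogeneity and vertex-criticality in an essential way, just at different points.
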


\begin{proof}
 Let $K$ be a component of $S^2_3(i)$ or ${S_4(i)}$. Since $G$ has no $K_5$ or $W$, $K$ has no triangles or $C_5$.
 Since $G$ is $P_5$-free, $G$ is bipartite. So $\chi(K)\le 2$. Clearly, if $\chi(K)=1$, then $K$ is isomorphic to $K_1$.
 Now assume that $\chi(K)=2$. Let $X$ and $Y$ be the bipartition of $K$. Let $x\in X$ and $y\in Y$ with $xy\in E$.
 Suppose that $(X\cup Y) \setminus \{x,y\}\neq \emptyset$.
 Since $G$ is 5-vertex-critical, $G-((X\cup Y) \setminus \{x,y\})$ has a 4-coloring $\phi$. Without loss of generality, we may assume that
 $\phi(x)=1$ and $\phi(y)=2$. Now if we color every vertex in $X$ with color 1 and  color every vertex in $Y$ with color 2, 
 the resulting coloring is a 4-coloring of $G$ by \autoref{clm:homo}. This contradicts that $G$ is 5-vertex-critical.
 So $K$ is isomorphic to $K_2$.
\end{proof}

\begin{claim}\label{clm:S23}
    $|S^2_3(i)|\leq3$, for all $1\leq i\leq5$.
\end{claim}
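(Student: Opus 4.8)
The plan is to bound $|S^2_3(i)|$ by combining the homogeneity of its components (\autoref{clm:homo}) with the structural restrictions already established, using $5$-vertex-criticality to forbid too many components. First I would recall from \autoref{clm:k1k2} that every component $K$ of $S^2_3(i)$ is isomorphic to $K_1$ or $K_2$; hence the size of $S^2_3(i)$ is controlled by the number of its connected components. So it suffices to show that $S^2_3(i)$ has few components, and then multiply by the maximum component size $2$. The target bound $|S^2_3(i)|\le 3$ suggests that in fact $S^2_3(i)$ has at most two components, one of which can be a $K_2$ and one a $K_1$ (giving $3$), or that there is at most one nontrivial component.

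The main tool I expect to use is \autoref{lem:dominated subsets} together with the homogeneity statements. The idea is that if $S^2_3(i)$ contained two components $K$ and $K'$ with $\chi(K)\le\chi(K')$, I would try to exhibit $X=V(K)$ and $Y=V(K')$ (or suitable subsets) satisfying the three conditions of \autoref{lem:dominated subsets}: by \autoref{clm:homo} each component is homogeneous, so distinct components are either complete or anticomplete to each other, and since $S^2_3(i)$ is triangle-free and $K_5$-free the components cannot be complete to each other in a way that creates forbidden subgraphs. The key point is that, by \autoref{clm:other-3}, \autoref{clm:45-3}, and \autoref{clm:S4-1}, all of $G\setminus S^2_3(i)$ interacts with the components of $S^2_3(i)$ in a uniform (complete/anticomplete) manner, so the external neighborhoods $N(V(K))$ of two components coincide or nest. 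This would let me verify ``$Y$ is complete to $N(X)$'' and ``$X,Y$ anticomplete,'' contradicting criticality via \autoref{lem:dominated subsets}, and thereby forcing the number of components to be small.

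Concretely, I would first argue that any two distinct components of $S^2_3(i)$ are anticomplete to each other (otherwise, being homogeneous sets of cliques $K_1$ or $K_2$, adjacency between two $K_2$'s would create a $K_5$ after adding $v_i,v_{i+2}$, or an excluded configuration); this matches the ``anticomplete'' hypothesis of \autoref{lem:dominated subsets}. Next, since every vertex outside $S^2_3(i)$ is complete or anticomplete to each component (\autoref{clm:homo}) and the cycle vertices $v_{i-2},v_i,v_{i+2}$ are complete to all of $S^2_3(i)$ by definition, the external neighborhoods of the components agree on the cycle; the remaining comparison is on $S_4(i)\cup S_5\cup S^1_3(i\pm 1)$ and related sets, where \autoref{clm:S4-1} and the earlier claims pin down the adjacencies. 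If two components had comparable external neighborhoods I would invoke \autoref{lem:dominated subsets} to delete one, contradicting criticality, so at most one $K_2$-component and at most one $K_1$-component (with the appropriate external neighborhood) can survive, yielding $|S^2_3(i)|\le 3$.

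The hard part will be controlling the interaction of two components with the sets $S_4(i)$ and $S_5$, where a single outside vertex may be complete to one component and anticomplete to another, so the external neighborhoods need not be nested a priori. I expect to handle this by a careful case analysis showing that any such ``splitting'' vertex, together with $v_{i-2}$ or $v_{i+2}$ and vertices of the two components, produces an induced $P_5$ or chair, thereby forcing the neighborhoods to nest after all and enabling the application of \autoref{lem:dominated subsets}.
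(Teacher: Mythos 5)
Your skeleton matches the paper's: use \autoref{clm:k1k2} to reduce to counting components (each a $K_1$ or $K_2$), then show via \autoref{lem:dominated subsets} and \autoref{clm:homo} that there is at most one component of each chromatic type, giving $|S^2_3(i)|\le 1+2=3$. (Note that distinct components being anticomplete is automatic, so that part of your argument needs no work.) The paper likewise observes that if two components $K,K'$ of the same type existed, criticality forces ``crossing'' vertices $u\in N(K)\setminus N(K')$ and $v\in N(K')\setminus N(K)$, which by \autoref{clm:other-3} must lie in $S_4(i)\cup S_5$, and that $u\nsim v$ gives an induced $P_5$ through $v_{i+1}$.

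The genuine gap is in your final step. You expect that the remaining case --- a splitting configuration on $S_4(i)\cup S_5$ --- can always be killed by exhibiting an induced $P_5$ or chair, ``forcing the neighborhoods to nest after all.'' It cannot. When $u\sim v$ (with not both in $S_5$, by \autoref{clm:S5}), the configuration $C\cup\{k,k',u,v\}$ contains no induced $P_5$ or chair; it contains the nine-vertex graph $P$ from the family $\mathcal{F}$, which is itself a $5$-vertex-critical $(P_5,\text{chair})$-free graph. The contradiction therefore comes not from $(P_5,\text{chair})$-freeness but from the reduction made at the start of the proof of \autoref{thm:containing a C5}: if $G$ contains a member of $\mathcal{F}$ then $G$ equals that member and is finite, so one may assume $G$ is $\mathcal{F}$-free. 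Your proposal never invokes $\mathcal{F}$, and without excluding $P$ (and using $K_5$- and $W$-freeness inside \autoref{clm:k1k2}, which you do use implicitly) the adjacent-$u,v$ case cannot be closed by forbidden-induced-subgraph arguments alone.
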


\begin{proof}
Let $K$ be a component of $S^2_3(i)$. We say that $K$ is of {\em type $i$} if $\chi(K)=i$.
We show that there is at most one component of type $i$ for $i=1,2$.
Take two components $K,K'$ of the same type. Let $k\in K$ and $k'\in K'$.
By \autoref{lem:dominated subsets}, there are vertices $u,v$ such that $u\in N(K)\setminus N(K')$ and $v\in N(K')\setminus N(K)$.
By  \autoref{clm:homo}, $uk\in E,vk'\in E$ and $uk',vk\notin E$.
Any vertex in $V(G)-(S^2_3(i)\cup{S_4(i)}\cup{S_5})$ can't mix on two vertices of $S^2_3(i)$ by \autoref{clm:other-3}. 
So $u,v\in{S_4(i)\cup S_5}$ by our assumption about $k,k'$.
If $u\nsim{v}$, $\{k,u,v_{i+1},v,k'\}$ induces a $P_5$. 
Therefore, $u\sim{v}$. By \autoref{clm:S5}, $u,v$ cannot be in $S_5$ at the same time. 
It is easy to see that $C\cup \{k,k',u,v\}$ contains an induced $P$, a contradiction.

As a result, $|S^2_3(i)|\leq 3$. 
\end{proof}

\begin{claim}\label{clm:star}
    $S_4(i)$ is a star, or $S_4(i)$ is complete to $S_4(i+2)\cup{S_4(i-2)}$, for all $1\leq{i}\leq5$.
\end{claim}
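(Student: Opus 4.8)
The plan is to first record that $S_4(i)$ is triangle-free, and then to prove the dichotomy by contraposition on the second disjunct: assuming $S_4(i)$ is \emph{not} complete to $S_4(i+2)\cup S_4(i-2)$, I would show $S_4(i)$ must be a star. For triangle-freeness, recall that $S_4(i)$ is complete to $\{v_{i-2},v_{i-1},v_{i+1},v_{i+2}\}$ and that $v_{i-2}v_{i-1}$ is an edge of $C$; hence any triangle $\{a,b,c\}$ inside $S_4(i)$ would make $\{a,b,c,v_{i-2},v_{i-1}\}$ induce a $K_5$, contradicting that $G$ is $\mathcal{F}$-free since $K_5\in\mathcal{F}$.

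Now suppose $S_4(i)$ is not complete to $S_4(i+2)\cup S_4(i-2)$, so there exist $w\in S_4(i\pm2)$ and $z\in S_4(i)$ with $w\nsim z$. The key tool is \autoref{clm:S4(i+2)}, which says that every vertex of $S_4(i\pm2)$ is complete to each nonadjacent pair of $S_4(i)$. Reading this contrapositively against $w\nsim z$: the vertex $z$ cannot belong to any nonadjacent pair inside $S_4(i)$, for otherwise $w$ would be forced adjacent to $z$. Thus $z$ is adjacent to every other vertex of $S_4(i)$, i.e. $z$ dominates $S_4(i)$. Combined with triangle-freeness, $S_4(i)\setminus\{z\}$ must be independent, since an edge there together with $z$ would form a triangle; therefore $G[S_4(i)]$ is a star centered at $z$, giving the first disjunct.

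I expect the only real subtlety to lie in the degenerate cases and in pinning down what "star" means. When $S_4(i)$ is empty or a single vertex the statement holds trivially (the empty set is vacuously complete to everything, and $K_1$ is a degenerate star), and when $S_4(i)$ is an independent set of size at least $2$ the hypothesis of \autoref{clm:S4(i+2)} applies at every vertex and forces $S_4(i)$ complete to $S_4(i\pm2)$, landing in the second disjunct. The genuinely load-bearing step is the contrapositive use of \autoref{clm:S4(i+2)} to promote a single non-edge between $S_4(i)$ and $S_4(i\pm2)$ into a dominating vertex of $S_4(i)$; everything else is a short consequence of $K_5$-freeness.
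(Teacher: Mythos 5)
Your proof is correct, but it takes a genuinely different route from the paper's. The paper splits on whether $S_4(i)$ is connected: in the disconnected case it applies \autoref{clm:S4(i+2)} to cross-component nonadjacent pairs, and in the connected case it invokes bipartiteness of $S_4(i)$ (via \autoref{clm:k1k2}, whose proof relies on $W$-freeness, $P_5$-freeness and a recolouring argument using $5$-vertex-criticality) and then splits on whether both parts of the bipartition have size at least $2$. You instead argue by contraposition on the second disjunct: a single non-edge $wz$ with $w\in S_4(i\pm 2)$ and $z\in S_4(i)$ forces, by the contrapositive of \autoref{clm:S4(i+2)}, that $z$ lies in no nonadjacent pair of $S_4(i)$, so $z$ dominates $S_4(i)$; triangle-freeness of $S_4(i)$ (which you derive directly from $K_5$-freeness using the adjacent common neighbours $v_{i-2},v_{i-1}$ on $C$) then makes $S_4(i)\setminus\{z\}$ independent, i.e.\ $S_4(i)$ is a star centred at $z$. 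Your version is shorter and more self-contained: it needs only $K_5$-freeness and \autoref{clm:S4(i+2)}, and in particular does not depend on \autoref{clm:k1k2} or on the connectivity case analysis. Both arguments handle the degenerate cases ($|S_4(i)|\le 1$, or $S_4(i)$ independent) the same way, and both reach the same dichotomy.
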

 
\begin{proof}
    If $S_4(i)$ is disconnected, $S_4(i)$ is complete to $S_4(i+2)\cup{S_4(i-2)}$ by \autoref{clm:S4(i+2)}. 
    If $S_4(i)$ is connected, then $S_4(i)$ is a bipartite graph by \autoref{clm:k1k2}.
    If $\chi(S_4(i))=1$, $S_4(i)$ is isomorphic to $K_1$ and we are done.
    Now assume that $|S_4(i)|\ge 2$. Let $X,Y$ be the bipartition of $S_4(i)$. 
    If $|X|\ge 2$ and $|Y|\ge 2$, then every vertex in $S_4(i\pm 2)$ is complete to $X\cup Y$ by \autoref{clm:S4(i+2)}.
    Thus, $S_4(i)$ is complete to $S_4(i\pm 2)$. Therefore, we may assume that $|X|=1$ and so $S_4(i)$ is a star.
\end{proof}

Recall that $R_i=S^1_3(i\pm{1})\cup S^2_3(i)\cup S_4(i\pm{1})\cup S_5$.

\begin{claim}\label{clm:size of star}
   If $S_4(i)$ is a star, then $|S_4(i)|\leq2$ for all $1\leq{i}\leq5$.
\end{claim}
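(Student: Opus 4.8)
The plan is to show that a star $S_4(i)$ with $|S_4(i)|\ge 3$ produces two comparable vertices, which is impossible in a vertex-critical graph. Suppose $S_4(i)$ is a star with center $c$ and at least two leaves $\ell_1,\ell_2$. Then $\ell_1\nsim\ell_2$, and each of $\ell_1,\ell_2$ is adjacent to $c$ and to $v_{i-2},v_{i-1},v_{i+1},v_{i+2}$ (and to no other vertex of $C$). I would argue that $N(\ell_1)=N(\ell_2)$; since $\ell_1,\ell_2$ are nonadjacent this makes them comparable, contradicting $5$-vertex-criticality, and hence forcing $S_4(i)$ to have at most one leaf, i.e.\ $|S_4(i)|\le 2$.

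First I would reduce the problem to the vertices of $R_i=S^1_3(i\pm 1)\cup S^2_3(i)\cup S_4(i\pm 1)\cup S_5$. Every vertex lying outside $S_4(i)\cup R_i$ is adjacent to both or to neither of $\ell_1,\ell_2$: the sets $S_0$, $S_1(\cdot)$, $S^1_2(\cdot)$, $S^2_2(\cdot)$ are empty by \autoref{clm:S12} and \autoref{clm:S0}; $S_4(i)$ is complete to $T_i$ by \autoref{clm:S4-1}; no vertex of $S^1_3(i)\cup S_4(i\pm 2)$ is mixed on the nonadjacent pair $\{\ell_1,\ell_2\}$ by \autoref{clm:S4-2} (indeed $S_4(i\pm 2)$ is complete to it by \autoref{clm:S4(i+2)}); and the vertices of $C$, together with the center $c$ and any further leaves, are trivially adjacent to both or to neither of $\ell_1,\ell_2$. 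Thus it suffices to prove that no vertex of $R_i$ distinguishes $\ell_1$ from $\ell_2$.

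So suppose $w\in R_i$ with $w\sim\ell_1$ and $w\nsim\ell_2$ (the reverse case is symmetric). Writing out $N_C(\cdot)$ for each of the six subsets making up $R_i$, one checks two facts: (i) $w\sim v_i$, and (ii) $w$ is adjacent to both endpoints of some edge of $C$ contained in $\{v_{i-2},v_{i-1},v_{i+1},v_{i+2}\}$, the candidate edges being $v_{i-2}v_{i-1}$, $v_{i-2}v_{i+2}$, and $v_{i+1}v_{i+2}$. Now split on the adjacency of $w$ to $c$. If $w\nsim c$, then $\ell_2,c,\ell_1,w,v_i$ induces a $P_5$: the edges $\ell_2c$, $c\ell_1$, $\ell_1w$, $wv_i$ are present, while $\ell_2\nsim\ell_1$, $\ell_2\nsim w$, $\ell_2\nsim v_i$, $c\nsim w$, and $c,\ell_1\nsim v_i$, so the path is induced. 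If instead $w\sim c$, then $\{c,\ell_1,w\}$ is a triangle and, since $c$ and $\ell_1$ are adjacent to all of $v_{i-2},v_{i-1},v_{i+1},v_{i+2}$, adding the two endpoints of the edge from (ii) yields five pairwise adjacent vertices, i.e.\ an induced $K_5$. Both outcomes contradict that $G$ is $\mathcal{F}$-free, so no such $w$ exists.

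This establishes $N(\ell_1)=N(\ell_2)$, hence $\ell_1,\ell_2$ are comparable, the desired contradiction; therefore $|S_4(i)|\le 2$. The only genuine work is the finite verification of (i) and (ii) in the last paragraph, which is the step I expect to require care: it is a routine but slightly tedious check running over the six subsets $S^1_3(i+1)$, $S^1_3(i-1)$, $S^2_3(i)$, $S_4(i+1)$, $S_4(i-1)$, $S_5$ that constitute $R_i$, confirming in each case that the vertex sees $v_i$ and sees two of $v_{i-2},v_{i-1},v_{i+1},v_{i+2}$ that are consecutive on $C$.
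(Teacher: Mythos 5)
Your proof is correct, but it takes a genuinely different route from the paper's. Both arguments start identically, using \autoref{clm:S4-1}--\autoref{clm:S4(i+2)} to localize any potential distinguisher of two nonadjacent vertices of $S_4(i)$ to $R_i$. From there the paper applies \autoref{lem:dominated subsets} to two leaves $x_1,x_2$ to obtain a pair of \emph{opposite} distinguishers $a\in N(x_1)\setminus N(x_2)$ and $b\in N(x_2)\setminus N(x_1)$, shows $a\sim b$ (else $\{x_1,a,v_i,b,x_2\}$ is an induced $P_5$), and then relies on a ``not hard to check'' case analysis producing one of the nine-vertex graphs $Q_1,Q_2,Q_3$. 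You instead rule out every single distinguisher $w\in R_i$ of $\{\ell_1,\ell_2\}$ outright by exploiting the star center $c$: if $w\nsim c$ then $\ell_2$--$c$--$\ell_1$--$w$--$v_i$ is an induced $P_5$, and if $w\sim c$ then $\{c,\ell_1,w\}$ together with the two ends of an edge of $C$ inside $N_C(w)\cap\{v_{i-2},v_{i-1},v_{i+1},v_{i+2}\}$ is a $K_5$; hence $\ell_1,\ell_2$ are comparable. Your facts (i) and (ii) do check out for all six pieces of $R_i$ (for $S^2_3(i)$ the relevant edge is $v_{i-2}v_{i+2}$, which is an edge of $C$ since $v_{i+2}=v_{i-3}$). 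The payoff of your version is that it needs only $K_5$ and $P_5$ rather than the unverified appeal to $Q_1,Q_2,Q_3$, and the certificates are fully explicit; what the paper's version buys is uniformity, since the same opposite-distinguisher template is reused in \autoref{clm:S4 with Ri}, where no common neighbour playing the role of $c$ is available.
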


\begin{proof}
Suppose that $S_4(i)=X\cup Y$ with $Y=\{y\}$. We show that $|X|\le 1$.
Suppose not. Let $x_1, x_2\in{X}$. 
By \autoref{lem:dominated subsets}, there exist $a\in{N(x_1)\backslash{N(x_2)}}$ and $b\in{N(x_2)\backslash{N(x_1)}}$. 
Note that any vertex of $G-R_i$ can't mix on two nonadjacent vertices of $X$ by \autoref{clm:S4-1} - \autoref{clm:S4(i+2)}.
So $a,b\in{R_i}$. 
If $a\nsim{b}$, $\{x_1,a,v_i,b,x_2\}$ induces a $P_5$. So $a\sim{b}$.
It is not hard to check that $G$ contains one of $Q_1$, $Q_2$ and $Q_3$, a contradiction. 
Thus, there are at most two vertices in $X$, and so $|S_4(i)|\leq2$.
\end{proof}

\begin{claim}\label{clm:S4 with Ri}
For each $i$, when $S_4(i)$ is complete to $S_4(i\pm{2})$ and $R_i$ is not empty, then $|S_4(i)|\leq6$.
\end{claim}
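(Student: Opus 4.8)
The plan is to fix a vertex $s\in R_i$ (which exists since $R_i\neq\varnothing$) and analyse how it meets $S_4(i)$. Write $N=N(s)\cap S_4(i)$ and $M=S_4(i)\setminus N$. First I would show that $N$ is independent: if $a\sim b$ with $a,b\in N$ then $\{s,a,b\}$ is a triangle, and $s$ shares with every vertex of $S_4(i)$ two \emph{consecutive} vertices of the path $v_{i+1}v_{i+2}v_{i-2}v_{i-1}$ (which two depends only on the type of $s\in R_i$, but such an adjacent pair always exists); adding that edge of $C$ to $\{s,a,b\}$ produces a $K_5$, contradicting $\mathcal{F}$-freeness. By \autoref{clm:S4-3} the non-neighbours $M$ of $s$ induce a clique, and $S_4(i)$ is triangle-free (a triangle there together with $v_{i+1},v_{i+2}$ is again a $K_5$), so $|M|\le 2$. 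As $S_4(i)=N\sqcup M$, it then suffices to prove $|N|\le 4$.

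Next I would pin down which vertices can distinguish two members of $N$. Since $S_0=S_1(\cdot)=S^1_2(\cdot)=S^2_2(\cdot)=\varnothing$ by \autoref{clm:S12} and \autoref{clm:S0}, every vertex off $V(C)$ lies in some $S^1_3$, $S^2_3$, $S_4$ or $S_5$. The cycle is uniform on $N$ (each vertex of $N$ sees exactly $v_{i\pm1},v_{i\pm2}$), $T_i$ is complete to $S_4(i)$ by \autoref{clm:S4-1}, and by \autoref{clm:S4-2} every vertex of $S^1_3(i)\cup S_4(i\pm2)$ is complete or anticomplete to the independent set $N$; hence none of these sets distinguishes two vertices of $N$. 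Inside $S_4(i)$ a vertex of $N$ has neighbours only in $M$, because $N$ is independent. Consequently $N(a)\triangle N(a')\subseteq R_i\cup M$ for all $a,a'\in N$.

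The heart of the matter is to prove that the traces $N(a)\cap R_i$ $(a\in N)$ are pairwise comparable under inclusion. Suppose not: then for some $a_1,a_2\in N$ there are $p\in(N(a_1)\setminus N(a_2))\cap R_i$ and $q\in(N(a_2)\setminus N(a_1))\cap R_i$. Every vertex of $R_i$ is adjacent to $v_i$, so if $p\nsim q$ then $\{a_1,p,v_i,q,a_2\}$ induces a $P_5$; thus $p\sim q$. Now $a_1,a_2$ are nonadjacent vertices of $S_4(i)$ with $R_i$-private neighbours $p\sim q$, which is exactly the configuration handled in \autoref{clm:size of star}: running over the possible types of $p,q\in R_i$ one checks that $G$ then contains one of $Q_1,Q_2,Q_3$ (or an induced $P_5$ or chair), a contradiction. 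I expect this type-by-type verification to be the main obstacle, since $R_i$ has four kinds of members and the forbidden witness varies among them; the mitigating point is that it is essentially the same computation already performed for \autoref{clm:size of star}, so that argument can be reused.

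Finally I would combine the two structural facts. By the comparability just established, any two vertices of $N$ have nested $R_i$-traces, and they agree on everything outside $R_i\cup M$; therefore if two distinct $a,a'\in N$ had the same trace $N(a)\cap M=N(a')\cap M$, their whole neighbourhoods would be nested, making $a$ and $a'$ comparable vertices — impossible in a vertex-critical graph. Hence $a\mapsto N(a)\cap M$ is injective on $N$, giving $|N|\le 2^{|M|}\le 4$. Together with $|M|\le 2$ this yields $|S_4(i)|=|N|+|M|\le 6$, as required.
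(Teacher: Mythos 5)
Your decomposition of $S_4(i)$ into $N=N(s)\cap S_4(i)$ and $M=S_4(i)\setminus N$ for a fixed $s\in R_i$ is sound: the observation that $N$ is independent (via a common adjacent pair on $C$ and $K_5$-freeness) and that $M$ is a clique of size at most $2$ (via \autoref{clm:S4-3} and triangle-freeness of $S_4(i)$) are both correct, and your list of which sets can distinguish two vertices of $N$ is complete. The gap is the central step: you assert that two nonadjacent $a_1,a_2\in N$ with private neighbours $p,q\in R_i$ satisfying $p\sim q$ always force one of $Q_1,Q_2,Q_3$ (or a $P_5$ or chair). That assertion does not hold for an arbitrary pair of types of $p$ and $q$. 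The paper's own proof reaches exactly this configuration and concludes only that $G$ contains $Q_1$ or $Q_2$ \emph{unless} $p\in S^1_3(i-1)\cup S_4(i+1)$ and $q\in S^1_3(i+1)\cup S_4(i-1)$ (up to swapping $p$ and $q$); in that surviving case no contradiction is extracted from $V(C)\cup\{a_1,a_2,p,q\}$ alone, and the paper must keep working. Your appeal to \autoref{clm:size of star} does not transfer: $Q_3$ contains an \emph{edge} inside the $S_4$-part, which in the star case is supplied by the centre of the star, whereas your set $N$ is independent, so $Q_3$ cannot be assembled from $\{a_1,a_2,p,q\}$; and $Q_1$, $Q_2$ both require $p$ and $q$ to have the same neighbourhood type on $C$. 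Consequently the pairwise comparability of the traces $N(a)\cap R_i$ is unproved, and with it your bound $|N|\le 2^{|M|}\le 4$ collapses.

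For comparison, the paper closes this hole by splitting on whether $S_4(i)$ contains a $P_1+P_2$. If not, $S_4(i)$ is complete bipartite, and the paper takes \emph{four} vertices $x_1,\dots,x_4$ in one part; the surviving ``one left, one right'' types of the four private neighbours, combined with \autoref{clm:S13} and \autoref{clm:S4-1}, then force a $K_5$, bounding each part by $3$. If $S_4(i)$ contains a $P_1+P_2=\{a,b,c\}$, a separate analysis of how $S^1_3(i)$ and $R_i$ attach to $\{a,b,c\}$ and to $S_4(i)\setminus\{a,b,c\}$ gives $|S_4(i)|\le 4$. To salvage your route you would need a genuine argument for the surviving case — some way to exclude several vertices of $N$ whose $R_i$-traces are pairwise incomparable with all private-neighbour pairs of the left/right type — which is essentially the additional work the paper performs; the final counting step ($a\mapsto N(a)\cap M$ injective, hence $|N|+|M|\le 2^{|M|}+|M|\le 6$) is fine once that is in place.
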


\begin{proof}
When $S_4(i)$ is $(P_1+P_2)$-free, $S_4(i)$ is a complete bipartite graph.
Let $(X,Y)$ be a partition of $S_4(i)$.
We show that $|X|,|Y|\leq 3$.
Suppose not. Let $x_1,x_2,x_3,x_4$ be vertices in $X$.
By \autoref{lem:dominated subsets}, there vertices $a_1\in{N(x_1)\backslash{N(x_2)}}$, $a_2\in{N(x_2)\backslash{N(x_1)}}$.
Notice that $a_1,a_2\in R_i$ by \autoref{clm:S4-1} - \autoref{clm:S4(i+2)}. 
If $a_1\nsim a_2$, $G$ contains an induced $P_5=\{x_1,a_1,v_i,a_2,x_2\}$.
So $a_1\sim a_2$.
Then $a_1\in S^1_3(i-1)\cup{S_4(i+1)}$ and $a_2\in S^1_3(i+1)\cup{S_4(i-1)}$, otherwise, it is easy to check that $G$ contains one of $Q_1$ and $Q_2$. 
Similarly, there exists $a_3\in{N(x_3)\backslash{N(x_4)}}$, $a_4\in{N(x_4)\backslash{N(x_3)}}$ and $a_3,a_4\in R_i, a_3\sim a_4$.
Thus $\{x_3,x_4\}$ is complete to $\{a_1,a_2\}$, and $\{x_1,x_2\}$ is complete to $\{a_3,a_4\}$.
This shows that $a_1,a_2,a_3,a_4$ are pairwise different vertices.
Then $a_3\in S^1_3(i-1)\cup{S_4(i+1)}$, $a_4\in S^1_3(i+1)\cup{S_4(i-1)}$.
Recall that $S^1_3(i-1)$ or $S^1_3(i+1)$ is a clique by \autoref{clm:S13}, and $S^1_3(i-1)$ is complete to $S_4(i+1)$, $S^1_3(i+1)$ is complete to $S_4(i-1)$ by \autoref{clm:S4-1}.
If $a_1\nsim a_3$ and $a_2\nsim a_4$, then $a_1,a_3\in S_4(i+1)$ and $a_2,a_4\in S_4(i-1)$, then $\{v_{i-2},v_{i+2},x_3,a_1,a_2\}$ is an induced $K_5$.
Otherwise, if $a_1\sim a_3$, $\{v_{i-1},v_{i-2},x_3,a_1,a_3\}$ induces $K_5$. 
So $a_2\sim a_4$, then $\{v_{i+1},v_{i+2},x_3,a_2,a_4\}$ induces a $K_5$, a contradiction.
So $|S_4(i)|\leq 6$ if $S_4(i)$ is $(P_1+P_2)$-free.

Now suppose that $S_4(i)$ contains a $P_1+P_2$. 
Let $P_1+P_2=\{a,b,c:a\nsim{b},a\nsim{c}, b\sim{c}\}$.
We first prove some useful facts about $P_1+P_2$.\\

\hspace{2.5cm}{${S^1_3(i)}$ is anticomplete to $P_1+P_2$.} \hfill (1)\\

Every $x\in{S^1_3(i)}$ is either complete or anticomplete to $\{a,b,c\}$ by \autoref{clm:S4-2}. If $x$ is complete to $\{a,b,c\}$, then $G$ contains an induced $W$, a contradiction. So $x$ is anticomplete to $\{a,b,c\}$. This completes the proof of (1).\\

\hspace{1.5cm}{For any $y\in{R_i}$, $\{y,a,b,c\}$ induces either a $P_4$ or a $2P_2$.}\hfill (2)\\

Let $y\in{R_i}$. 
Note that $\{y\}\cup S_4(i)$ is triangle-free or else $G$ contains a $K_5$. 
If $y$ is not adjacent to $a$, then $y\sim{b}, y\sim{c}$ by \autoref{clm:S4-3}. 
Now $G$ induces a $K_5$, a contradiction. 
So $y\sim{a}$. 
If $y\nsim{b}, y\nsim{c}$, then $\{y,a,b,c\}$ induces a $2P_2$.
If $y$ is adjacent to exact one vertex of $\{b,c\}$, we assume by symmetry that $y\sim{b}, y\nsim{c}$ and so $\{a,y,b,c\}$ induces a $P_4$. This completes the proof of (2).\\

 Next we discuss about $S_4(i)\backslash\{a,b,c\}$. Let $x\in{S^1_3(i)}$, $z\in S_4(i)\backslash\{a,b,c\}$,
 and we define $Y_1=\{y_1\in R_i: \{y_1,a,b,c\}\; \text{induces\; a\;} P_4\}$, and 
$Y_2=\{y_2\in R_i:\{y_2,a,b,c\}\; \text{induces\; a \;} 2P_2\}$.\\

\hspace{2.5cm} {$S^1_3(i)$ is anticomplete to ${S_4(i)\backslash\{a,b,c\}}$.}\hfill (3)\\

If $z\sim{x}$, then $z$ is complete to $\{a,b,c\}$ by (1). Now $G$ contains an induced $W$, a contradiction. So $z\nsim{x}$. This completes the proof of (3). 

 So ${S^1_3(i)}$ is anticomplete to $S_4(i)$ by (1) and (3).\\

\hspace{0.5cm}{For any $y_1\in Y_1, z_1\in S_4(i)\backslash\{a,b,c\}$,  $z_1y_1,z_1c\in E$, and $z_1a,z_1b\notin E$.} \hfill (4)\\

If $z_1\nsim{y_1}$, then $z_1\sim{c}$ by $y_1c\notin E$ and \autoref{clm:S4-3}. So $z_1\nsim{b}$ by \autoref{clm:clique}. If $z_1\nsim{a}$, $\{y_1,a,b,c,z\}$ induces a $P_5$. So $z_1\sim{a}$. Then there is an induced $C_5=\{a,y_1,b,c,z_1\}$, contradicting \autoref{clm:clique}.
So $z_1\sim{y_1}$, then $z_1\nsim{a}$ and $z_1\nsim{b}$ since $S_4(i)$ is triangle-free. If $z_1\nsim{c}$, $\{a,y_1,b,c\}$ and $\{z_1\}$ induce a chair. So $z_1\sim{c}$. This completes the proof (4).\\

\hspace{0.5cm}{For any $y_2\in Y_2, z_2\in S_4(i)\backslash\{a,b,c\}$, $z_2y_2\in E$, and $z_2a,z_2b,z_2c\notin E$.} \hfill (5)\\

If $z_2\nsim{y_2}$, then $z_2\sim{b}$ and $z_2\sim{c}$ by $y_2b,y_2c\notin E$ and \autoref{clm:S4-3}. 
Then $\{z_2,b,c\}$ induces a triangle, contradicting \autoref{clm:clique}.
So $z_2\sim{y_2}$ and then $z_2\nsim{a}$ by the fact that $\{y_2\}\cup S_4(i)$ is triangle-free. 
If $z_2$ is adjacent to exact one of $b,c$, then $\{z_2,y_2,a,b,c\}$ induces a $P_5$. So $z_2\nsim{b}$ and $z_2\nsim{c}$. This completes the proof (5).\\

We can infer that any vertex in $R_i$ is complete to $S_4(i)\backslash\{a,b,c\}$ by (4) and (5).
Suppose that there exist two vertices $z,z'\in S_4(i)\backslash\{a,b,c\}$.
If $Y_1\neq \emptyset$ and $Y_2\neq \emptyset$, $z$ is adjacent to $c$ by (4) and is nonadjacent to $c$ by (5), a contradiction. 
So $R_i=Y_1$ or $R_i=Y_2$.
Note that any vertex in $R_i$ is complete to two ends of an edge of $C_5\cap N(S_4(i))$.
Since $G$ is $K_5$-free, $z\nsim z'$.
Then $N(z)=N(z')$ by \autoref{clm:S4-1}, contradicting to \autoref{lem:dominated subsets}.
So $|S_4(i)\backslash\{a,b,c\}|\leq1$.
Then $|S_4(i)|\leq 4$.
\end{proof}

\begin{claim}\label{clm:empty Ri}
For each $i$, when $S_4(i)$ is complete to $S_4(i\pm{2})$ and $R_i$ is empty, $|S_4(i)|\leq2$. 
\end{claim}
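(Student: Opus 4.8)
The plan is to assume $|S_4(i)|\ge 3$ and derive a contradiction, exploiting that $R_i=\varnothing$ severely limits which vertices outside $S_4(i)$ can see its members. First I would record the two structural facts I will lean on. Since every vertex of $S_4(i)$ is adjacent to both $v_{i-2}$ and $v_{i+2}$, which are themselves adjacent, any triangle inside $S_4(i)$ would complete a $K_5$; hence $S_4(i)$ is triangle-free. Next I would pin down $N(S_4(i))$. Because $R_i$ is empty and $S_0,S_1,S^1_2,S^2_2$ vanish by \autoref{clm:S0} and \autoref{clm:S12}, the only vertices outside $S_4(i)$ that can touch it are the four cycle vertices $v_{i\pm1},v_{i\pm2}$, the sets $S^1_3(i\pm2)\cup S^2_3(i\pm1)\cup S^2_3(i\pm2)$ (complete to $S_4(i)$ by \autoref{clm:S4-1}), the set $S_4(i\pm2)$ (complete to $S_4(i)$ by hypothesis), and $S^1_3(i)$. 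Collecting the complete part into a fixed set $D$, every $u\in S_4(i)$ satisfies $N(u)\setminus S_4(i)=D\cup\big(N(u)\cap S^1_3(i)\big)$.

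The key observation is then that \emph{nonadjacent} vertices of $S_4(i)$ have identical external neighborhoods: if $u\nsim v$ in $S_4(i)$, no vertex of $S^1_3(i)$ can be mixed on $\{u,v\}$ by \autoref{clm:S4-2}, so $N(u)\cap S^1_3(i)=N(v)\cap S^1_3(i)$ and hence $N(u)\setminus S_4(i)=N(v)\setminus S_4(i)$. Writing $\mathrm{ext}(u):=N(u)\setminus S_4(i)$, the contrapositive says that any two vertices with different external neighborhoods must be adjacent. Partition $S_4(i)$ into classes of equal $\mathrm{ext}$-value. Choosing one vertex from each of three distinct classes would yield three pairwise adjacent vertices, i.e.\ a triangle, so triangle-freeness forces at most two classes.

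I would then split on the number of classes. If there are two classes $C_1,C_2$ (with $\mathrm{ext}$-values $D_1\ne D_2$), every $C_1$--$C_2$ pair is adjacent, and triangle-freeness makes each $C_i$ independent, so $S_4(i)$ is the complete bipartite graph on $C_1\cup C_2$; but then any two vertices of the same part have exactly the same neighborhood $D_i\cup C_{3-i}$, giving a pair of comparable vertices unless $|C_1|,|C_2|\le1$, whence $|S_4(i)|\le2$, contradicting $|S_4(i)|\ge 3$. If there is a single class, every vertex has the same external neighborhood $D^{\ast}$, so $D^{\ast}$ is complete to $S_4(i)$ while every other outside vertex is anticomplete to it; thus $S_4(i)$ is a homogeneous set, each of its components is homogeneous, and by \autoref{clm:k1k2} each component is $K_1$ or $K_2$. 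With $|S_4(i)|\ge3$ there are at least two components: two $K_1$'s, or a $K_1$ together with any other component, give comparable vertices, while two $K_2$'s $X,Y$ satisfy the hypotheses of \autoref{lem:dominated subsets} (anticomplete, equal chromatic number $2$, and $Y$ complete to $N(X)=D^{\ast}$), a contradiction in every case. Hence $|S_4(i)|\le2$.

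The main obstacle is the bookkeeping in the middle step: correctly certifying that, once $R_i=\varnothing$, the \emph{only} possible distinguisher of two members of $S_4(i)$ lives in $S^1_3(i)$, and that \autoref{clm:S4-2} controls it precisely on nonadjacent pairs but \emph{not} on adjacent ones. This is exactly why the complete-bipartite possibility survives the ``equal external neighborhood'' argument and must be eliminated separately by the twin/comparability argument, rather than being folded into the homogeneous case. Once this dichotomy between the homogeneous branch and the complete-bipartite branch is set up cleanly, both close quickly via comparable vertices and \autoref{lem:dominated subsets}.
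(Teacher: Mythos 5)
Your proof is correct and follows essentially the same route as the paper's: both reduce, via \autoref{clm:S4-2} and the triangle-freeness of $S_4(i)$, to a dichotomy between a homogeneous $S_4(i)$ (closed using \autoref{clm:k1k2} and \autoref{lem:dominated subsets}) and a complete bipartite $S_4(i)$ (closed using comparable vertices). Your bookkeeping by external-neighborhood classes merely repackages the paper's preliminary connectivity-plus-bipartition argument; the key ingredients and final contradictions are the same.
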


\begin{proof}
If $S_4(i)$ is disconnected, then there are two components $K_1,K_2$ of $S_4(i)$.
Every vertex of $S^1_3(i)$ is either complete or anticomplete to $K_1\cup K_2$ by \autoref{clm:S4-2}. 
So $K_1$ and $K_2$ are homogeneous components by \autoref{clm:S4-1} - \autoref{clm:S4(i+2)}.
Moreover, $N(K_1)=N(K_2)\subseteq{T_i\cup{S^1_3(i)}\cup{S_4(i\pm{2})}\cup{C_5}}$. 
This contradicts \autoref{lem:dominated subsets}. Therefore, $S_4(i)$ is connected. 

Recall that $\chi(S_4(i))\leq2$ by \autoref{clm:clique}. If $\chi(S_4(i))=1$, then $|S_4(i)|=|K_1|=1$ and we are done.
When $\chi(S_4(i))=2$, $S_4(i)$ is a bipartite graph.
Let $(X,Y)$ be the bipartition of $S_4(i)$.
Every vertex $s\in S^1_3(i)$ is either complete or anticomplete to $X$(resp. $Y$) by \autoref{clm:S4-2}.
So $X$(resp. $Y$) is homogeneous with respect to $G-Y$(resp. $G-X$).
If there are $x\in X, y\in Y$ with $x\nsim y$, then every vertex $s\in S^1_3(i)$ cannot mix on $S_4(i)$.
Then $S_4(i)$ is a homogeneous set, and $|S_4(i)|=|K_2|=2$ by \autoref{clm:k1k2}.
If $X$ is complete to $Y$.
Then $X$ is a homogeneous set.
For any pairwise vertices $x_1,x_2\in X$, we have $N(x_1)=N(x_2)$, contradicting \autoref{lem:dominated subsets}.
So $|X|=1$. 
In the same way, $|Y|=1$. 
Therefore, $|S_4(i)|\leq2$.
\end{proof}

\begin{claim}\label{clm:bound S4}
    $|S_4(i)|\leq6$.
\end{claim}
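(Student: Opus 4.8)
The plan is to prove the bound as a short case analysis that simply assembles the four immediately preceding claims, which together cover every structural possibility for $S_4(i)$. The key observation is that \autoref{clm:star} already gives a clean dichotomy: either $S_4(i)$ is a star, or $S_4(i)$ is complete to $S_4(i+2)\cup S_4(i-2)$. Each branch of this dichotomy has been bounded elsewhere, so the remaining task is purely a matter of invoking the right claim in each case.

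First I would dispose of the star case. If $S_4(i)$ is a star, then \autoref{clm:size of star} gives $|S_4(i)|\le 2$, which is well within the claimed bound. Otherwise, by \autoref{clm:star} we are in the case where $S_4(i)$ is complete to $S_4(i\pm 2)$. Here I would split on whether $R_i$ is empty. If $R_i\neq\varnothing$, then \autoref{clm:S4 with Ri} yields $|S_4(i)|\le 6$ directly. If $R_i=\varnothing$, then \autoref{clm:empty Ri} gives $|S_4(i)|\le 2$. Since these subcases are exhaustive, in every case $|S_4(i)|\le 6$, and the bound of $6$ is attained (in the worst case) through the complete-bipartite analysis of \autoref{clm:S4 with Ri}.

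I do not expect any genuine obstacle in this claim itself, since the difficult combinatorial work has already been completed in the earlier claims. In particular, the sharpness of the constant $6$ originates entirely in \autoref{clm:S4 with Ri}, where the argument forbids either side of the complete bipartite graph $S_4(i)$ from exceeding size $3$ by exhibiting a forbidden $K_5$ (built from the cycle vertices $v_{i\pm 1},v_{i\pm 2}$ together with distinguishing vertices $a_1,a_2,a_3,a_4\in R_i$). The only care required here is to confirm that the three branches above are logically exhaustive and that the hypotheses of each invoked claim are satisfied in the corresponding branch; there is no new graph-theoretic content to establish.

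Thus the proof is essentially bookkeeping: apply \autoref{clm:star} to branch, then in the non-star branch apply \autoref{clm:S4 with Ri} or \autoref{clm:empty Ri} according to whether $R_i$ is nonempty. The main value of this claim is that it packages the three separate bounds into the single uniform inequality $|S_4(i)|\le 6$, which is the form needed to conclude that $S_3\cup S_4\cup S_5$ has bounded size and hence that $G$ has finite order.
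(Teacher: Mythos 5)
Your proposal is correct and matches the paper's argument exactly: the paper's proof of this claim is a one-line assembly of \autoref{clm:size of star}, \autoref{clm:S4 with Ri} and \autoref{clm:empty Ri}, with the case split coming from the dichotomy in \autoref{clm:star}, which is precisely the bookkeeping you describe. No differences worth noting.
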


\begin{proof}
It follows from \autoref{clm:size of star} to \autoref{clm:empty Ri} that $|S_4(i)|\leq6$.    
\end{proof}

\begin{claim}\label{clm:bounded}
    $|S_5|\leq2^{55}$.
\end{claim}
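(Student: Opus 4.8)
The plan is to bound $|S_5|$ by a pure counting argument: every vertex of $S_5$ is essentially determined by the part of its neighborhood lying outside $S_5$, and the number of vertices outside $S_5$ has already been controlled by the earlier claims.

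First I would collect the structural facts that pin down $V(G)\setminus S_5$. By \autoref{clm:S12} and \autoref{clm:S0}, the sets $S_0$, $S_1(i)$, $S^1_2(i)$, $S^2_2(i)$ are all empty, so
\[
V(G)=V(C)\cup\bigcup_{i=1}^{5}\bigl(S^1_3(i)\cup S^2_3(i)\cup S_4(i)\bigr)\cup S_5 .
\]
Writing $A:=\bigcup_{i=1}^{5}\bigl(S^1_3(i)\cup S^2_3(i)\cup S_4(i)\bigr)$ and using $|S^1_3(i)|\le 2$ (\autoref{clm:S13<3}), $|S^2_3(i)|\le 3$ (\autoref{clm:S23}) and $|S_4(i)|\le 6$ (\autoref{clm:bound S4}), I obtain $|A|\le 5\cdot(2+3+6)=55$.

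Next I would use that $S_5$ is an independent set by \autoref{clm:S5}. Consequently no vertex of $S_5$ has a neighbor inside $S_5$, so for every $u\in S_5$ we have $N(u)\subseteq V(C)\cup A$. Since by definition every vertex of $S_5$ is complete to $V(C)$, it follows that $N(u)=V(C)\cup\bigl(N(u)\cap A\bigr)$. Hence the entire neighborhood of $u$ is recovered from the single subset $N(u)\cap A\subseteq A$, and the map $\psi\colon S_5\to 2^{A}$ given by $\psi(u)=N(u)\cap A$ captures all the information about $u$'s adjacencies.

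To finish, I would invoke vertex-criticality to make $\psi$ injective. Because $G$ is $5$-vertex-critical it contains no pair of comparable vertices. Any two distinct $u,v\in S_5$ are nonadjacent, so if $\psi(u)=\psi(v)$ then $N(u)=N(v)$, which makes $u$ and $v$ comparable, a contradiction. Therefore $\psi$ is injective, and $|S_5|\le |2^{A}|=2^{|A|}\le 2^{55}$. The substance of the argument is entirely in assembling the earlier bounds, and the one step I would treat most carefully is verifying that the enumeration of $V(G)\setminus S_5$ is exhaustive (that $S_0,S_1,S^1_2,S^2_2$ are genuinely empty and nothing else can avoid $S_5$), since the final estimate rests on $|A|\le 55$; the injectivity of $\psi$ via the no-comparable-vertices property is then immediate.
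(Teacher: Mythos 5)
Your proof is correct and follows essentially the same route as the paper: since $S_5$ is independent and every vertex of $S_5$ is complete to $V(C)$, each vertex of $S_5$ is determined by its neighborhood in the (already bounded) union of the sets $S^1_3(i)\cup S^2_3(i)\cup S_4(i)$, and two vertices with the same neighborhood would be comparable, contradicting \autoref{lem:dominated subsets}. You merely spell out the pigeonhole/injectivity step and the enumeration of $V(G)\setminus S_5$ more explicitly than the paper does.
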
 

\begin{proof}
     Suppose that $|S_5|>2^{55}$. We know any two vertices in $S_5$ are nonadjacent by \autoref{clm:S5}. 
     By the pigeonhole principle, there are two vertices $u,v\in{S_5}$ such that $N(u)=N(v)$, contradicting \autoref{lem:dominated subsets}. 
     So $|S_5|\leq2^{5(|S^1_3(i)\cup{S^2_3(i)}\cup{S_4(i)}|)}\\
     \leq2^{5(2+3+6)}=2^{55}$.
\end{proof}

The lemma follows from \autoref{clm:S13<3}, \autoref{clm:S23}, \autoref{clm:bound S4} and \autoref{clm:bounded}. 
\end{proof}

\section{Appendix}

Below we give the adjacency lists of graphs in $\mathcal{F}$ other than $K_5$. 

\begin{itemize}
\item Graph $W$: \{0: 1 4 5 6; 1: 0 2 5 6; 2: 1 3 5 6; 3: 2 4 5 6; 4: 0 3 5 6; 5: 0 1 2 3 4 6; 6: 0 1 2 3 4 5\}
\item Graph $P$: \{0: 1 4 5 6; 1: 0 2 7 8; 2: 1 3 5 6 7 8; 3: 2 4 5 6 7 8; 4: 0 3 7 8; 5: 0 2 3 7; 6: 0 2 3 8; 7: 1 2 3 4 5 8; 8: 1 2 3 4 6 7\}
\item Graph $Q_1$: \{0: 1 4 5 6; 1: 0 2 5 6 7 8; 2: 1 3 5 6 7 8; 3: 2 4 7 8; 4: 0 3 7 8; 5: 0 1 2 6 7; 6: 0 1 2 5 8; 7: 1 2 3 4 5; 8: 1 2 3 4 6\}
\item Graph $Q_2$: \{0: 1 4 5 6; 1: 0 2 5 6 7 8; 2: 1 3 5 6 7 8; 3: 2 4 5 6 7 8; 4: 0 3 7 8; 5: 0 2 3 6 7; 6: 0 2 3 5 8; 7: 1 2 3 4 5; 8: 1 2 3 4 6\}
\item Graph $Q_3$: \{0: 1 4 5 6; 1: 0 2 5 7 8; 2: 1 3 5 7 8; 3: 2 4 6 7 8; 4: 0 3 6 7 8; 5: 0 1 2 6; 6: 0 3 4 5 8; 7: 1 2 3 4 8; 8: 1 2 3 4 6 7\}
\end{itemize}


\end{document}